\newtheorem{theorem}{Theorem}
\newtheorem{cor}[theorem]{Corollary}
\newtheorem{lemma}[theorem]{Lemma}
\newtheorem{pro}[theorem]{Proposition}
\newtheorem{remark}[theorem]{Remark}
\newcommand{\bs}{\bigskip}
\newcommand{\n}{\noindent}
\newcommand{\ds}{\displaystyle}
\newcommand{\Aa}{\ensuremath{\mathcal{A}}}
\newcommand{\Ba}{\ensuremath{\mathcal{B}}}
\newcommand{\Pa}{\ensuremath{\mathcal{P}}}
\newcommand{\Da}{\ensuremath{\mathcal{D}}}
\newcommand{\Ha}{\ensuremath{\mathcal{H}}}
\newcommand{\ta}{\Theta}
\begin{document}
\title[The structure of the minimum size supertail of a subspace partition]{The structure of the minimum size supertail of a subspace partition}
\author[E. N\u{a}stase and P. Sissokho]{\tiny{E. N\u{a}stase} \\ \\
 Mathematics Department\\ Xavier University\\
Cincinnati, Ohio 45207, USA\\ \\
P. Sissokho  \\ \\
Mathematics Department \\ Illinois State University\\ 
Normal, Illinois 61790, USA}
\thanks{nastasee@xavier.edu,  psissok@ilstu.edu}
\dedicatory{\scriptsize{(Dedicated to Professor Olof Heden on the occasion of his retirement)}}

\begin{abstract}
Let $V=V(n,q)$ denote the vector space of dimension $n$ over the finite field with $q$ elements.
A {\em subspace partition} $\Pa$ of $V$ is a collection of nontrivial subspaces of $V$ such that 
each nonzero vector of $V$ is in exactly one subspace of $\Pa$. 
For any integer $d$, the {\em $d$-supertail} of $\Pa$ is the set of subspaces in $\Pa$ of dimension less than $d$, and it is denoted by $ST$. Let $\sigma_q(n,t)$ denote the minimum number of subspaces 
in any subspace partition of $V$ in which the largest subspace has dimension $t$. It was shown by Heden 
et al. that $|ST|\geq \sigma_q(d,t)$, where $t$ is the largest dimension of a subspace in $ST$.
In this paper, we show that if $|ST|=\sigma_q(d,t)$, then the union of all the subspaces in $ST$ constitutes a subspace under certain conditions. 
\end{abstract}

\maketitle
\section{Introduction}\label{sec:1}
Let $V=V(n,q)$ denote a vector space of dimension $n$ over the finite field with $q$ elements. We use 
the term {\em $d$-subspace} to refer to a subspace of dimension $d$.
For any subspace $U$ of $V$, we let $U^*$ denote the set of nonzero vectors in $U$. 
A {\em subspace partition} $\Pa$ of $V$, also known as a {\em vector space partition}, is a collection of 
nontrivial subspaces of $V$ such that each vector of $V^*$ is in exactly one subspace of 
 $\Pa$ (e.g., see Heden~\cite{He-S} for 
a survey). The study of subspace  partitions originated from the general problem of partitioning a 
finite (not necessarily abelian) group into subgroups that only intersect at the identity element (e.g.; 
see Zappa~\cite{Za} for a survey). Subspace partitions can be used to construct translation 
planes, error-correcting codes, orthogonal arrays, and designs (e.g., 
see~\cite{An,Ba,Be,DrFr,He-C,HeSc,HoPa,Li}).
 
\bs Suppose that there are $m$ distinct dimensions, $d_1<d_2<\dots<d_m$, that occur in a subspace 
partition $\Pa$, and let $n_d$ denote the number of $d$-subspaces in $\Pa$. 
Then the expression 
$[d_1^{n_{d_1}},\ldots,d_m^{n_{d_m}}]$ is called the {\em type} of $\Pa$. The general 
problem in this area is to find necessary and sufficient conditions for the existence of 
a subspace partition of $V$ of a given type (e.g.,  see~\cite{Bu,Be2,ESSSV,He-E,Li,HeLe} 
for the solution of some special cases). Two obvious necessary conditions for the existence 
of a subspace partition of type $[d_1^{n_{d_1}},\ldots,d_m^{n_{d_m}}]$ are 
the {\em packing condition} 
\begin{equation}\label{eq:pack}
\sum_{i=1}^{m} n_{d_i}(q^{d_i}-1)=q^n-1,
\end{equation}
and the {\em dimension condition}
\begin{equation}\label{eq:dim}
\begin{cases} 
&\mbox{$n\geq d_i+d_j$ if $n_{d_i}$+$n_{d_j}\geq2$ and $i\not=j$}; \mbox{ and } \\
&\mbox{$n\geq 2d_i$ if $n_{d_i}\geq2$}.
\end{cases}
\end{equation}

\bs To the best of our knowledge, there are not many other known  necessary 
conditions for the existence of a subspace partition $\Pa$ of $V$.
Heden and Lehmann~\cite{HeLe} derived some necessary conditions 
(see Lemma~\ref{lem:HeLe1}) by essentially counting in two ways 
tuples of the forms $(H,U)$ and $(H,W_1,W_2)$, where $H$ is a hyperplane of $V$ and $U,W_1,W_2$ 
are subspaces of $\Pa$ that are contained in $H$.
Blinco et al.~\cite{BESSSV} and Heden~\cite{He-E,He-T} derived some necessary conditions 
on the set $T$ of subspaces of minimum dimension (called {\em tail} in~\cite{He-E}) of $\Pa$.
The concept of tail was later generalized by Heden et al.~\cite{HLNS2}, as we shall see below.

Let $\Pa$ be a subspace partition of $V=V(n,q)$ of type $[d_1^{n_{d_1}},\ldots,d_m^{n_{d_m}}]$.  
For any integer $d$ such that $d_1<d\leq d_m$, the {\em $d$-supertail} of $\Pa$ is the set of 
subspaces in $\Pa$ of dimension less than $d$, and it is denoted by $ST$.
The {\em size} of a subspace partition $\Pa$ is the number of subspaces in $\Pa$. 
For $1\leq t<n$, let $\sigma_q(n,t)$ denote the {\em minimum size} of any subspace partition of 
$V$ in which the largest subspace has dimension $t$. 
The exact value of $\sigma_q(n,t)$ is given by the following theorem (see Andr\'e~\cite{An} and Beutelspacher~\cite{Be} for $n\pmod{t}\equiv 0$, and see~\cite{HLNS1,NaSi} for $n\pmod{t}\not\equiv 0$).
\begin{theorem}\label{thm:HLNS1}  
Let $n, k, t$, and $r$ be integers such that 
$0\leq r < t$, $k\geq 1$, and $n=kt+r$. Then
\[\sigma_q(n,t)=\begin{cases} 
\ds\frac{q^{kt}-1}{q^t-1}\;\mbox{ for $r= 0$},\\
q^{t}+1\;\mbox{ for $r\geq 1$ and $3\leq n<2t$},\\
  q^{t+r} \sum\limits_{i=0}^{k-2}q^{it}+q^{\lceil \frac{t+r}{2}\rceil}+1\mbox{ for $r\geq 1$ and $n\geq2t$}.
\end{cases}\]
\end{theorem}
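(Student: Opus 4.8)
The plan is to prove the three cases by matching an explicit construction (the upper bound) with a nonexistence argument (the lower bound), organizing the latter as an induction on $k$. The case $r=0$ is classical: any partition $\Pa$ whose largest subspace has dimension $t$ consists of subspaces of dimension at most $t$, each covering at most $q^t-1$ of the $q^{kt}-1$ nonzero vectors, so $|\Pa|\ge (q^{kt}-1)/(q^t-1)$, with equality attained by a $t$-spread, which exists exactly because $t\mid n$. For $r\ge1$ and $3\le n<2t$ (so $k=1$) I would first observe that the dimension condition~\eqref{eq:dim} rules out two disjoint $t$-subspaces once $n<2t$, so $\Pa$ has a unique $t$-subspace $T$; every other $U\in\Pa$ meets $T$ only in $0$, forcing $\dim U\le n-t=r$. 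The $q^n-q^t=q^t(q^r-1)$ vectors outside $T$ then require at least $q^t$ further subspaces, giving $|\Pa|\ge q^t+1$. For the construction I would identify $T$ with $\mathbb{F}_{q^t}$ and an $r$-dimensional complement with an $\mathbb{F}_q$-subspace $W\subseteq\mathbb{F}_{q^t}$; the $q^t$ subspaces $U_\alpha=\{(w,\alpha w):w\in W\}$ ($\alpha\in\mathbb{F}_{q^t}$) inside $W\oplus T$, together with $T$, form a partition of size $q^t+1$.

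For $r\ge1$ and $n\ge2t$ I would deduce the formula from the recursion
\[\sigma_q(n,t)=q^{\,n-t}+\sigma_q(n-t,t)\ \ (n\ge3t),\qquad \sigma_q(2t+r,t)=q^{\,t+r}+q^{\lceil(t+r)/2\rceil}+1,\]
which unrolls to $q^{t+r}\sum_{i=0}^{k-2}q^{it}+q^{\lceil(t+r)/2\rceil}+1$. The upper bound is a layered construction along a chain $V=C_{k-1}\supset\cdots\supset C_0$ with $\dim C_0=t+r$ and $\dim C_j=(j+1)t+r$: partition the core $C_0\cong V(t+r,q)$ by a minimum partition of $q^{\lceil(t+r)/2\rceil}+1$ subspaces (a spread when $t+r$ is even), each of dimension $\lceil(t+r)/2\rceil\le t$, and then cover each difference $C_j\setminus C_{j-1}$ by $q^{\,jt+r}$ pairwise disjoint $t$-subspaces meeting $C_{j-1}$ trivially. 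The existence of such a layer is again a spread-set statement---a $(jt+r)$-dimensional $\mathbb{F}_q$-space of injective maps $C_j/C_{j-1}\cong V(t,q)\to C_{j-1}$, available since the domain dimension $t$ does not exceed the codomain dimension---and summing over $j$ yields $\sum_{j=1}^{k-1}q^{\,jt+r}=q^{t+r}\sum_{i=0}^{k-2}q^{it}$.

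The lower bound is the hard direction, and I would establish the recursion above by induction on $k$. For the step $n\ge3t$ I would fix a $t$-subspace $T\in\Pa$ and project onto $V/T\cong V(n-t,q)$; since each $U\ne T$ meets $T$ trivially, the images form a $q^t$-fold covering of $V/T$ by subspaces of dimension at most $t$. A bare incidence count only reproduces the packing bound~\eqref{eq:pack}, so the decisive tool is the Heden--Lehmann double count of the pairs $(H,U)$ and triples $(H,W_1,W_2)$ with $H$ a hyperplane (Lemma~\ref{lem:HeLe1}), which converts the covering multiplicity into the exact additive term $q^{\,n-t}$ and reduces matters to $\sigma_q(n-t,t)$. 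The base case $n=2t+r$ is where the term $q^{\lceil(t+r)/2\rceil}$ appears: one must show that the $t$-subspaces of $\Pa$ form a partial spread of size at least $q^{t+r}$ and that the vectors they miss force a partition of a $(t+r)$-dimensional section with at least $q^{\lceil(t+r)/2\rceil}+1$ parts---equivalently, that $V(t+r,q)$ admits no subspace partition with fewer than $q^{\lceil(t+r)/2\rceil}+1$ members. I expect this base case, and in particular pinning down the ceiling for odd $t+r$, to be the main obstacle, since it is exactly where pure counting fails and a structural argument about completing near-spreads of $V(t+r,q)$ is required.
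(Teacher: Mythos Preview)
The paper does not prove Theorem~\ref{thm:HLNS1}; it is quoted as a known result, with the case $r=0$ attributed to Andr\'e~\cite{An} and Beutelspacher~\cite{Be} and the case $r\ne0$ to~\cite{HLNS1,NaSi}. There is therefore no in-paper proof to compare your proposal against.

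Your outline handles $r=0$ and $3\le n<2t$ correctly and matches the classical arguments, and your upper-bound construction for $r\ge1$, $n\ge2t$ is standard and correct. The lower bound in that regime, however, remains a sketch with two substantive gaps you yourself flag. First, the inductive step is only asserted: the images in $V/T$ form a $q^t$-fold cover, not a subspace partition, so there is no direct way to invoke $\sigma_q(n-t,t)$, and you do not show how the Heden--Lehmann identities (Lemma~\ref{lem:HeLe1}) actually yield the additive term $q^{\,n-t}$ plus that quantity. Second, the base case $n=2t+r$, where the ceiling $\lceil(t+r)/2\rceil$ must be established, is left open---and this base case is precisely the content of~\cite{HLNS1,NaSi}, not a residual difficulty to be handled after the main argument. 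Those papers do not proceed via your quotient-plus-recursion route; they argue directly through hyperplane-intersection counts (in the spirit of Lemma~\ref{lem:HeLe0}) together with bounds on partial spreads. So while your plan is plausible as a program, it has not closed the two steps that constitute the real theorem.
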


The following theorem of Heden et al.~\cite{HLNS2} generalizes a theorem of Heden~\cite[Theorem~$1$]{He-T}.
\begin{theorem}\label{thm:HLNS2}
Let $\Pa$ be a subspace partition of $V(n,q)$ of type $[d_1^{n_{d_1}},\ldots,d_m^{n_{d_m}}]$ and let $2\le s\le m$. If $ST$ is $d_s$-supertail of $\Pa$, then 
\begin{equation}\label{eq:main}
|ST|\geq \sigma_q(d_s,d_{s-1})\;.
\end{equation}
\end{theorem}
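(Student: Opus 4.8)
The plan is to reduce the bound to the defining property of $\sigma_q$ by exhibiting a subspace partition of a $d_s$-dimensional space that has at most $|ST|$ parts and largest part of dimension $d_{s-1}$. The basic mechanism is restriction: for any subspace $W\le V$, the collection $\Pa_W=\{U\cap W:U\in\Pa,\ U\cap W\neq 0\}$ is again a subspace partition, now of $W$, since every nonzero vector of $W$ lies in exactly one member of $\Pa$. First I would try to choose $W$ with $\dim W=d_s$ so that (i) $W$ meets every member $B$ of $\Pa$ with $\dim B\ge d_s$ only in $0$, and (ii) $W$ contains some $U_0\in ST$ with $\dim U_0=d_{s-1}$; such a $U_0$ exists because $d_{s-1}$ occurs as a part-dimension of $\Pa$ and $d_{s-1}<d_s$. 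Granting such a $W$, every part of $\Pa_W$ has the form $U\cap W$ with $U\in ST$, the assignment $U\mapsto U\cap W$ is injective on the members of $ST$ meeting $W$, so $|\Pa_W|\le|ST|$; moreover $\Pa_W$ is a subspace partition of $W\cong V(d_s,q)$ whose largest part is $U_0$, of dimension exactly $d_{s-1}$. By the definition of $\sigma_q$ this gives $|\Pa_W|\ge\sigma_q(d_s,d_{s-1})$, and combining the two inequalities yields~\eqref{eq:main}.

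So everything comes down to producing a subspace $W$ with properties (i) and (ii). I would fix $U_0\in ST$ of dimension $d_{s-1}$, pass to the quotient $\overline V=V/U_0$ (of dimension $n-d_{s-1}$), and observe that it is enough to find a $(d_s-d_{s-1})$-dimensional subspace of $\overline V$ disjoint from the image of every member of $\Pa$ of dimension $\ge d_s$: its full preimage in $V$ then contains $U_0$, has dimension $d_s$, and meets each such $B$ only in $U_0\cap B=0$, so it satisfies (i) and (ii). When the members of $\Pa$ of dimension $\ge d_s$ are neither too large nor too numerous, one can hope to produce such a subspace by a counting or greedy argument, bounding their number via the packing condition~\eqref{eq:pack}. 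This, however, is where I expect real difficulty: there are partitions --- for instance ones with a member of dimension close to $n$, or with a large family of members of dimension $\ge d_s$ packed inside a small subspace --- for which no $W$ satisfying (i) and (ii) exists, so a uniform proof cannot rely on restriction to a single generic $d_s$-subspace.

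The hard part is therefore the case where the clean restriction is unavailable, and I would attack it by induction on $n$: restrict $\Pa$ to a hyperplane $H$ chosen to contain a member of $\Pa$ of maximal dimension, apply the inductive hypothesis to $\Pa_H$, and transfer the conclusion back to $\Pa$. The crux --- the step I expect to be genuinely delicate --- is that a member $B$ of $\Pa$ with $\dim B=d_s$ meets $H$ in a $(d_s-1)$-dimensional subspace, which threatens to enlarge the $d_s$-supertail of $\Pa_H$ beyond (the restriction of) $ST$; so $H$ and the precise form of the inductive statement must be engineered so that the supertail of $\Pa_H$ stays controlled by $ST$. I would expect the finished argument to blend this reduction with the Heden--Lehmann two-way count of incident hyperplanes (Lemma~\ref{lem:HeLe1}) and with the explicit extremal partitions behind Theorem~\ref{thm:HLNS1}, the latter being needed for the boundary cases in which equality holds in~\eqref{eq:main}.
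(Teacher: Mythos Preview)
The present paper does not prove Theorem~\ref{thm:HLNS2}: it is quoted from~\cite{HLNS2} and used throughout as a known result, so there is no in-paper proof to compare your proposal against.

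On the proposal itself: the restriction to a $d_s$-subspace $W\supseteq U_0$ disjoint from every large member of $\Pa$ is clean when such a $W$ exists, and you are right that it need not. Your fallback---induction on $n$ via a hyperplane $H$---has an obstacle you flag but do not resolve: members of $\Pa$ of dimension exactly $d_s$ not contained in $H$ drop to dimension $d_s-1$ in $\Pa_H$; when $d_s=d_{s-1}+1$ these land in the supertail of $\Pa_H$, and in general the list of dimensions occurring in $\Pa_H$ need not be $\{d_1,\ldots,d_m\}$, so even the phrase ``the $d_s$-supertail of $\Pa_H$'' has to be renegotiated. A workable induction would require a strengthened statement (replace the discrete dimension list by an arbitrary threshold $d$ and track how $\sigma_q(d,t)$ behaves when both parameters may drop), and you have not supplied one. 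For orientation, the argument in~\cite{HLNS2} is driven not by geometric restriction but by the hyperplane congruence recorded here as Lemma~\ref{lem:pts}---that $\sum_{i<s}(n_{d_i}-b_{H,d_i})q^{d_i}$ is a nonnegative multiple of $q^{d_s}$ for every hyperplane $H$---together with Lemma~\ref{lem:HeLe0}; the Heden--Lehmann two-way count you mention at the end is the main engine of the proof, not a finishing tool for boundary cases.
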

If equality holds in~\eqref{eq:main}, then Theorem~\ref{thm:HLNS2} has the following interesting corollary (see~\cite{HLNS2}). 
\begin{cor}\label{cor:HLNS2}
If $|ST|=\sigma_q(d_s,d_{s-1})$ and $d_s\geq 2d_{s-1}$, then the union of the subspaces in $ST$ forms a $d_s$-subspace. 
\end{cor}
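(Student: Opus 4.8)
The plan is to pass to the subspace spanned by $ST$, restate the conclusion there, and then extract it from the equality case of (the proof of) Theorem~\ref{thm:HLNS2}. Put $t=d_{s-1}$ and $d=d_{s}$, let $\widehat W=\langle\,\bigcup_{U\in ST}U\,\rangle$, and set $e=\dim\widehat W$. Since the members of $ST$ are parts of the partition $\Pa$, they are pairwise disjoint, hence form a partial subspace partition of $\widehat W$; any nonzero vector of $\widehat W$ that they miss lies in some $A\in\Pa\setminus ST$, necessarily with $\dim A\ge d$, and then $A\cap\widehat W$ is a \emph{proper} subspace of $\widehat W$ (as $\widehat W$ is spanned by subspaces disjoint from $A$, we cannot have $\widehat W\subseteq A$). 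Therefore $\Pa':=ST\cup\{A\cap\widehat W:A\in\Pa\setminus ST,\ A\cap\widehat W\ne\{0\}\}$ is a genuine subspace partition of $\widehat W=V(e,q)$ which contains $ST$, and $ST$ contains a $t$-subspace, because $\Pa$ has at least one part of dimension $t$ and every such part lies inside $\widehat W$. Since $\langle\,\bigcup_{U\in ST}U\,\rangle=\widehat W$, proving the corollary amounts to proving that (i) $e=d$ and (ii) $\Pa'=ST$ (i.e. $ST$ already covers all of $\widehat W$); granting both, $\bigcup_{U\in ST}U=\widehat W$ is a $d$-subspace.

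To prove (i) and (ii) I would revisit the argument behind Theorem~\ref{thm:HLNS2} and carry out its equality analysis on $\Pa'$. In spirit that argument is an inductive ``peeling'' mirroring the recursive construction of a minimum-size subspace partition of $V(d,t)$: one strips off successive layers of $t$-subspaces, and the bound $|ST|\ge\sigma_q(d,t)$ is assembled one summand of the formula of Theorem~\ref{thm:HLNS1} at a time. The hypothesis $d\ge 2t$ — equivalently $k\ge2$, where $d=kt+r$ with $k\ge1$ and $0\le r<t$ — is exactly what keeps $\sigma_q(d,t)$ out of the degenerate middle case of Theorem~\ref{thm:HLNS1}, so that its minimum configurations are spread-like and rigid. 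When $|ST|=\sigma_q(d,t)$, every inequality accumulated in the argument is forced to be an equality; I expect the equality case at each stage to force the relevant subspaces of $ST$ to form a spread filling a subspace that is $t$ dimensions larger, and to force no part of $\Pa$ of dimension $\ge d$ to meet that subspace, so that running the induction to the end yields a flag of subspaces of dimensions $t,2t,\ldots$ (with a last step of size $r$) stabilizing at a $d$-subspace equal to the disjoint union of all subspaces in $ST$ — which gives both $e=d$ and $\Pa'=ST$.

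The step I expect to be the main obstacle is this equality analysis — in particular, ruling out that some $A\in\Pa$ with $\dim A\ge d$ meets $\widehat W$ nontrivially, and ruling out $e>d$ — because soft counting does not suffice. Comparing $\sum_{U\in ST}(q^{\dim U}-1)\le\sigma_q(d,t)(q^{t}-1)$ with $q^{e}-1$ yields only $e\ge d-t+1$, strictly weaker than $e\ge d$ once $t\ge2$, and it controls nothing about the vectors of $\widehat W$ that $ST$ leaves uncovered; indeed, for suitable parameters $\sigma_q(d,t)$ pairwise-disjoint subspaces of dimension $\le t$ can partition a \emph{proper} subspace of $V(d,q)$. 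So the proof must use how the entire partition $\Pa$ of $V(n,q)$ constrains the subspaces of dimension $\ge d$ that can intersect $\widehat W$, consistently with the extremality $|ST|=\sigma_q(d,t)$ and with the necessary conditions \eqref{eq:pack} and \eqref{eq:dim}; this is the nontrivial content already present in the proof of Theorem~\ref{thm:HLNS2}, now in its extremal form, and the careful bookkeeping in the inductive step — tracking at once the number of parts used, the dimension accumulated so far, and conditions \eqref{eq:pack} and \eqref{eq:dim} — is where the real work lies.
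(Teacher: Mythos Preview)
The paper does not supply its own proof of Corollary~\ref{cor:HLNS2}; the result is imported from~\cite{HLNS2} and only quoted here, so there is no in-paper argument against which to match your proposal line by line.

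That said, what you have written is a plan, not a proof, and the gap you yourself flag is real and unfilled. Two concrete issues. First, your intention to ``carry out the equality analysis on $\Pa'$'' runs into a structural problem: in the induced partition $\Pa'$ of $\widehat W$, the pieces $A\cap\widehat W$ with $A\in\Pa\setminus ST$ can have \emph{any} dimension between $1$ and $e-1$, so $ST$ is not, a priori, the $d$-supertail of $\Pa'$, and the hypotheses that drive the proof of Theorem~\ref{thm:HLNS2} are simply not available inside $\widehat W$. Second, your description of that proof as an ``inductive peeling'' of $t$-spreads is speculative and does not match the actual machinery: judging from~\cite{HLNS2} (see the citation in the proof of Lemma~\ref{lem:A2}) and from the techniques deployed throughout the present paper (Lemmas~\ref{lem:HeLe1}, \ref{lem:HeLe0}, \ref{lem:pts}, and the proofs of Lemmas~\ref{lem:sub} and~\ref{lem:HLNS3-2}), the argument stays in $V(n,q)$ and proceeds by hyperplane counting --- using the $\bmod\ q^{d_s}$ constraints of Lemma~\ref{lem:pts} to control the $b_{H,i}$, double counting to determine how many hyperplanes contain all of $ST$, and then invoking Proposition~\ref{pro:is-d-dim} to recognise a subspace. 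Since the scaffold you propose to reuse does not exist in the form you assume, the ``careful bookkeeping in the inductive step'' would have to be invented from scratch; as written, the obstacle you name remains an actual gap.
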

Note that the crucial part of the conclusion of Corollary~\ref{cor:HLNS2} is that  
the set of all points covered by the subspaces in $ST$ is a subspace. (In general, the
 $d_s$-supertail of a subspace  partition of $V(n,q)$ need not be a subspace.)
One outstanding question that remains is whether the conclusion of Corollary~\ref{cor:HLNS2} 
holds for $d_{s-1}<d_s<2d_{s-1}$.
For the special case when $ST$ is a simple tail, Heden~\cite[Theorem~$3$]{He-T} proved the following theorem.
\begin{theorem}\label{thm:He-T}
Let $\Pa$ be a subspace partition of $V(n,q)$ of type $[d_1^{n_{d_1}},\ldots,d_m^{n_{d_m}}]$.
If $ST$ is the tail of $\Pa$ $($i.e., $ST$ is the set of $d_1$-subspaces$)$  
 such that $|ST|= q^{d_1}+1$ and $d_2<2{d_1}$, then $ST$ is a $d_1$-spread $($ i.e., a subspace partition consisting of  $d_1$-subspaces$)$.
\end{theorem}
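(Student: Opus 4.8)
The plan is to prove the equivalent statement that $W:=\bigcup_{U\in ST}U$ is a subspace of $V(n,q)$; since distinct members of $ST$ meet only in $0$, $W$ then has exactly $q^{2d_1}$ elements, so it must be a $2d_1$-subspace and $ST$ a $d_1$-spread of it. Two immediate reductions come first. As $n_{d_1}=q^{d_1}+1\ge 3$, the dimension condition~\eqref{eq:dim} gives $n\ge 2d_1$, and if $n=2d_1$ the packing condition~\eqref{eq:pack} already forces $\Pa=ST$, so $\Pa$ is itself a $d_1$-spread; hence I may assume $n>2d_1$, so in particular $\Pa\setminus ST\neq\emptyset$. Next, $W$ is a subspace if and only if $P\cap(U_1+U_2)=\{0\}$ for all distinct $U_1,U_2\in ST$ and all $P\in\Pa\setminus ST$: if this holds, then for each such pair the only members of $\Pa$ meeting $E:=U_1+U_2$ lie in $ST$, so $E^*$ is partitioned by the traces $(U\cap E)^*$, $U\in ST$, giving $E\subseteq W$; as this holds for all pairs, $W$ is closed under addition (hence, being also closed under scalar multiplication, a subspace). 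The converse is immediate. So it suffices to show that no member of $\Pa\setminus ST$ can meet a $2d_1$-subspace $E$ spanned by two members of $ST$.

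Fix such a pair and write $E=U_1\oplus U_2$, and let $\Pa_E:=\{\,Q\cap E:Q\in\Pa,\ Q\cap E\neq\{0\}\,\}$ be the induced subspace partition of $E$. The structural picture is governed by the observation that \emph{every part of $\Pa_E$ has dimension at most $d_1$}: $U_1\in\Pa_E$ has dimension $d_1$, so any part $A\neq U_1$ meets $U_1$ trivially and hence $\dim A=\dim(A+U_1)-d_1\le 2d_1-d_1=d_1$. Counting the nonzero vectors of $E$ then gives
\[
q^{2d_1}-1=\sum_{A\in\Pa_E}\bigl(q^{\dim A}-1\bigr)\le|\Pa_E|\,(q^{d_1}-1),
\]
so $|\Pa_E|\ge q^{d_1}+1=\sigma_q(2d_1,d_1)$ by Theorem~\ref{thm:HLNS1}, with equality exactly when $\Pa_E$ is a $d_1$-spread of $E$. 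In these terms the goal of the previous paragraph reads: the members of $\Pa$ meeting $E$ all lie in $ST$ --- equivalently $\Pa_E=\{U\cap E:U\in ST,\ U\cap E\ne\{0\}\}$ --- in which case the point count forces each such $U$ to lie in $E$, and hence $W=E$. Thus one must exclude the possibility that some $P_0\in\Pa\setminus ST$ meets $E$; note that for such a $P_0$ the observation above gives $1\le\dim(P_0\cap E)\le d_1$, while $P_0\not\subseteq E$ (otherwise $U_1\cap P_0=\{0\}$ inside $E$ would force $\dim P_0\le d_1<d_2$).

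It is precisely here that the hypothesis $d_2<2d_1$ is indispensable, and this is the heart of the argument. I would proceed by induction on $n$. For the inductive step, if $d_2>d_1+1$ and there is a hyperplane $H$ of $V$ with $W\subseteq H$ (possible unless $W$ spans $V$), then the induced partition $\Pa_H$ of $V(n-1,q)$ again has tail $ST$ --- every other trace $Q\cap H$ has dimension at least $d_2-1>d_1$ --- and second-smallest dimension at most $d_2<2d_1$, so by induction $ST$ is already a $d_1$-spread inside $H$, whence $W$ is a subspace and no such $P_0$ exists. The residual configurations, namely $d_2=d_1+1$ (where passing to a hyperplane may enlarge the tail) and the case in which $W$ spans $V$, I would settle with the Heden--Lehmann inequalities of Lemma~\ref{lem:HeLe1}: writing $b(H)=|\{U\in ST:U\subseteq H\}|$ and double counting, over the hyperplanes $H$ of $V$, first the members of $ST$ and then the pairs of members of $ST$ that lie in $H$, one obtains the identity $\sum_H\bigl(b(H)-1\bigr)\bigl(b(H)-q^{d_1}-1\bigr)=0$; since $0\le b(H)\le q^{d_1}+1$, this already pins the incidence pattern down severely, and refining the count to include the $d_2$-subspaces of $\Pa$ --- using $d_1<d_2<2d_1$ to control the dimensions $\dim(U+P)$ for $U\in ST$ and $P$ a $d_2$-subspace --- should show that no $d_2$-subspace of $\Pa$ meets $E$, which is the contradiction sought.

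The main obstacle is exactly this last step: converting the numerical inequality $d_2<2d_1$ into the geometric fact that every member of $\Pa\setminus ST$ avoids the $2d_1$-subspace $E=U_1+U_2$. Everything preceding it --- the reduction to ``$W$ is a subspace'', the bilinear reformulation, and the bound $|\Pa_E|\ge q^{d_1}+1$ with its equality condition --- is routine. The real difficulty is that in the range $d_1<d_2<2d_1$ there is no off-the-shelf structure theorem for minimum-size subspace partitions (Corollary~\ref{cor:HLNS2} applies only when $d_2\ge 2d_1$), so the needed rigidity has to be extracted from the ambient partition $\Pa$, via the counting identities above together with the induction on $n$.
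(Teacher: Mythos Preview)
The paper does not actually prove Theorem~\ref{thm:He-T}; it is quoted from Heden~\cite[Theorem~3]{He-T} and then used as an ingredient in the proof of Theorem~\ref{thm:main}(i). So there is no ``paper's own proof'' to compare against. That said, the paper's toolkit (Lemma~\ref{lem:pts}, Lemma~\ref{lem:HeLe1}, Proposition~\ref{pro:is-d-dim}) does yield a short self-contained proof along the lines you sketch at the end, and it is worth pointing out where your write-up falls short of that.

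Your quadratic identity is correct: with $b(H)=b_{H,d_1}$ and $|ST|=q^{d_1}+1$ one checks from Lemma~\ref{lem:HeLe1}(i)--(iii) that
\[
\sum_{H\in\Ha}\bigl(b(H)-1\bigr)\bigl(b(H)-(q^{d_1}+1)\bigr)=0.
\]
The gap is that you only record $0\le b(H)\le q^{d_1}+1$. With $b(H)=0$ permitted, the summand equals $q^{d_1}+1>0$, so the identity no longer forces anything; positive $b(H)=0$ terms could balance negative interior terms, and you are left exactly where you say you are stuck. This is precisely the point at which the hypothesis $d_2<2d_1$ must enter, and it enters cleanly through Lemma~\ref{lem:pts}: for any hyperplane $H$,
\[
\bigl(q^{d_1}+1-b(H)\bigr)q^{d_1}=c_H\,q^{d_2}\quad\Longrightarrow\quad b(H)\equiv 1\pmod{q^{\,d_2-d_1}},
\]
so in particular $b(H)\ge 1$ for every $H$. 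With that in hand each summand above is $\le 0$, hence all vanish, i.e.\ $b(H)\in\{1,\,q^{d_1}+1\}$ for every hyperplane. Solving the linear system from Lemma~\ref{lem:HeLe1}(i)--(ii) then gives exactly $\theta_{n-2d_1}$ hyperplanes with $b(H)=q^{d_1}+1$, i.e.\ containing all of $ST$; since $W=\bigcup_{U\in ST}U$ carries $(q^{d_1}+1)\theta_{d_1}=\theta_{2d_1}$ points, Proposition~\ref{pro:is-d-dim} finishes: $W$ is a $2d_1$-subspace and $ST$ is a $d_1$-spread of it.

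In light of this, your $E=U_1\oplus U_2$ reformulation and the induction on $n$ are unnecessary scaffolding, and the induction has its own gap: the step ``pass to a hyperplane $H$ with $W\subseteq H$'' presupposes that some hyperplane contains all of $ST$, which is exactly the conclusion $b(H)=q^{d_1}+1$ for some $H$ that you have not yet established. The genuine missing idea in your write-up is the congruence $b(H)\equiv 1\pmod{q^{d_2-d_1}}$ coming from Lemma~\ref{lem:pts}; once you insert it, the hyperplane count you already wrote down is the whole proof, and the hedged ``should show'' paragraph can be deleted.
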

In this paper we prove the following generalization of Theorem~\ref{thm:He-T}. 
\begin{theorem}\label{thm:main}
Let $\Pa$ be a subspace partition of $V(n,q)$ of type $[d_1^{n_{d_1}},\ldots,d_m^{n_{d_m}}]$. 
Let $2\le s\le m$, and suppose $ST$ is
 a $d_s$-supertail of $\Pa$ such that $|ST|=\sigma_q(d_s,d_{s-1})$ and $d_s< 2d_{s-1}$.
Furthermore, assume that one of the following conditions holds.
\begin{enumerate}
\item[(i)] $s-1\leq 2$, that is $ST$ contains subspaces of at most $2$ different dimensions.
\item[(ii)] $d_s=2d_{s-1}-1$.
\item[(iii)] All the subspaces in $\Pa\setminus ST$ have the same dimension.
\end{enumerate}
Then the union of the subspaces in $ST$ forms a subspace $W$.  Moreover,
\begin{enumerate}
\item[(a)] $s-1=1$, $n_1=q^{d_1}+1$, and $\dim W =2d_1$, or
\item[(b)] $s-1=2$, $n_1=q^{d_2}$, $n_2=1$, and $\dim W=d_1+d_2$.
\end{enumerate}
\end{theorem}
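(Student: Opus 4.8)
The plan is the following. Since $d_{s-1}<d_s<2d_{s-1}$, Theorem~\ref{thm:HLNS1} gives $\sigma_q(d_s,d_{s-1})=q^{d_{s-1}}+1$, so the hypothesis reads $|ST|=q^{d_{s-1}}+1$. If $s-1=1$, then $d_{s-1}=d_1$ and $ST$ is a simple tail with $|ST|=q^{d_1}+1$ and $d_s<2d_1$, so Theorem~\ref{thm:He-T} shows $ST$ is a $d_1$-spread; its union $W$ is then a $2d_1$-subspace with $n_1=|ST|=q^{d_1}+1$, which is conclusion~(a). Hence I may assume $s-1\ge2$, and the task is to prove that $W=\bigcup_{U\in ST}U$ is a subspace and that in fact $s-1=2$ with $ST$ of the type in~(b).

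The basic device is the subspace partition induced on a small subspace. Fix a $d_{s-1}$-subspace $A\in ST$, which exists since $n_{d_{s-1}}\ge1$. For each $U\in ST\setminus\{A\}$ put $Y_U=A\oplus U$, so that $d_{s-1}<\dim Y_U=d_{s-1}+\dim U\le 2d_{s-1}$, and let $\Pa_{Y_U}=\{X\cap Y_U:\ X\in\Pa,\ X\cap Y_U\neq\{0\}\}$, which is a subspace partition of $Y_U$. Every member of $\Pa$ is disjoint from both $A$ and $U$, so for $X\notin\{A,U\}$ the trace $X\cap Y_U$ is disjoint from both summands of $Y_U=A\oplus U$, hence has dimension at most $\min(\dim A,\dim U)=\dim U\le d_{s-1}$; in particular $A$ is a largest part of $\Pa_{Y_U}$. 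Since $d_{s-1}<\dim Y_U\le 2d_{s-1}$, Theorem~\ref{thm:HLNS1} gives $\sigma_q(\dim Y_U,d_{s-1})=q^{d_{s-1}}+1$, whence $|\Pa_{Y_U}|\ge q^{d_{s-1}}+1=|ST|$. The first key claim is that no member of $\Pa\setminus ST$ meets $Y_U$. Granting it, every part of $\Pa_{Y_U}$ is a nonzero trace $U'\cap Y_U$ with $U'\in ST$, so $|\Pa_{Y_U}|\le|ST|$ and therefore $|\Pa_{Y_U}|=q^{d_{s-1}}+1$: the induced partition has minimum size, every member of $ST$ meets $Y_U$, and $Y_U\subseteq W$.

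Granting the first key claim for all $U$, the structure collapses. From the classification of minimum-size partitions of a space of dimension $n'$ with $t<n'\le 2t$ into subspaces of dimension $\le t$ --- the non-largest parts are disjoint from the unique largest one, hence of dimension $\le n'-t$, and their point totals leave no room unless they all have dimension exactly $n'-t$ --- one reads off that $\Pa_{Y_U}$ consists of $A$ together with $q^{d_{s-1}}$ pairwise disjoint subspaces of dimension $\dim Y_U-d_{s-1}=\dim U$, one of which is $U$; when $\dim U<d_{s-1}$ this also shows that $A$ is the only member of $ST$ of dimension $d_{s-1}$ contained in $Y_U$. If some member of $ST$ other than $A$ had dimension $d_{s-1}$, taking $U$ to be that member would make $\Pa_{Y_U}$ a $d_{s-1}$-spread of a $2d_{s-1}$-subspace all of whose $q^{d_{s-1}}+1$ parts are $d_{s-1}$-members of $ST$; since $|ST|=q^{d_{s-1}}+1$, this forces $s-1=1$, a contradiction. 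Hence $n_{d_{s-1}}=1$. Now take $U$ of minimum dimension $d_1$; every member of $ST$ meets $Y_U$ in a part of $\Pa_{Y_U}$, hence in dimension $d_1$ or $d_{s-1}$, so every member other than $A$ meets $Y_U$ in dimension exactly $d_1$. If $ST$ had a member $U'$ with $d_1<\dim U'<d_{s-1}$, then applying the analysis to $Y_{U'}$ would require $U$ to meet $Y_{U'}$ in a part of dimension $\dim U'$ or $d_{s-1}$, impossible because $\dim(U\cap Y_{U'})\le\dim U=d_1<\dim U'$. Therefore $ST$ has exactly the two dimensions $d_1$ and $d_2:=d_{s-1}$, so $s-1=2$ and $n_2=1$, hence $n_1=|ST|-1=q^{d_2}$. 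Finally, every member $U'$ of dimension $d_1$ has $U'\subseteq Y_U$ (its trace on $Y_U$ has dimension $d_1=\dim U'$), so $W=\bigcup ST\subseteq Y_U\subseteq W$; thus $W=Y_U$ is a $(d_1+d_2)$-subspace, which is conclusion~(b).

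The crux is the first key claim: no subspace of $\Pa$ of dimension $\ge d_s$ meets $Y_U=A\oplus U$. Because $\dim Y_U$ can exceed $d_s$, Corollary~\ref{cor:HLNS2} does not apply, and a priori a $d_s$-dimensional member of $\Pa$ could cut $Y_U$ in a subspace of dimension as large as $\dim U$. Excluding this is exactly where the hypotheses enter, and each of (i)--(iii) does it by a different route: under~(i) there are at most two dimensions in $ST$, so the induced-partition bookkeeping of the previous paragraph closes after a single step and an interfering large subspace would over-count $|ST|$ directly; under~(ii), $d_s=2d_{s-1}-1$ makes the gap $d_s-d_{s-1}=d_{s-1}-1$ as small as possible, so a would-be interfering member of $\Pa$ meets $Y_U$ in too large a trace for the slack allowed by $|ST|=\sigma_q(d_s,d_{s-1})$; and under~(iii), the equidimensionality of $\Pa\setminus ST$ turns the Heden--Lehmann two-way counts over the hyperplanes of $V$ into a rigid enough linear system to forbid the interference. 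I expect this step --- turning the numerical equality $|ST|=\sigma_q(d_s,d_{s-1})$ into the geometric statement that the supertail is self-contained, in the sense that no larger subspace of $\Pa$ touches the span of any $A\oplus U$ --- to be the main obstacle; the reduction to Theorem~\ref{thm:He-T} and the classification of minimum-size partitions of a space of dimension $\le 2t$ with largest part of dimension $t$, used above, are essentially available in the literature.
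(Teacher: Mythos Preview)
Your reduction \emph{after} the ``first key claim'' is clean and would give a pleasant uniform treatment, but the key claim itself is the entire content of the theorem, and you have not proved it. The sketches you offer for (i)--(iii) do not work as stated. Under (i), knowing that $|\Pa_{Y_U}|\ge q^{d_{s-1}}+1=|ST|$ does not prevent some parts of $\Pa_{Y_U}$ from being traces of large subspaces: a priori a few members of $ST$ can miss $Y_U$ while a few members of $\Pa\setminus ST$ hit it, and the cardinality bookkeeping stays balanced; there is no ``over-count of $|ST|$''. Under (ii), you have only the upper bound $\dim(X\cap Y_U)\le\dim U$ for $X\in\Pa\setminus ST$, so the ``too large a trace'' heuristic has no force; nothing in $d_s=2d_{s-1}-1$ gives a lower bound on that trace. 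Under (iii), the phrase ``rigid enough linear system to forbid the interference'' is a restatement of the goal, not an argument.

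The paper does not attempt your key claim at all; it proceeds differently in each case. For (i) it uses hyperplane counting (Lemmas~\ref{lem:pts} and~\ref{lem:A2}) to force $(n_{d_{s-1}},n_{d_1})\in\{(q^{d_{s-1}}+1,0),(1,q^{d_{s-1}})\}$ numerically, then handles the two outcomes by Theorem~\ref{thm:He-T} and by a separate hyperplane double-count (Lemma~\ref{lem:sub}). For (ii) it passes to the partition induced on a complement of a $d_1$-member (Lemma~\ref{lem:A}) to deduce $d_1\ge d_s-d_{s-1}=d_{s-1}-1$, hence at most two dimensions, reducing to (i). For (iii) it is entirely global: the Drake--Freeman partial-spread bound combined with the packing condition pins down $n_{d_s}$ exactly, after which a further hyperplane analysis (Lemma~\ref{lem:HLNS3-2}) shows the supertail points form a subspace. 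If you want to pursue your local $A\oplus U$ strategy, the real obstacle is producing, from the minimality $|ST|=q^{d_{s-1}}+1$, a reason why a $d_s$-subspace of $\Pa$ cannot meet $A\oplus U$; the paper's methods suggest this is not accessible without first controlling the global numerics.
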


The following result is a consequence of Theorem~\ref{thm:HLNS2}, Corollary~\ref{cor:HLNS2}, and Theorem~\ref{thm:main}.
\begin{cor}\label{cor:main-cor}
Let $\Pa$ be a subspace partition of $V(n,q)$ of type $[d_1^{n_{d_1}},\ldots,d_m^{n_{d_m}}]$ and let $2\le s< m$.
Let $ST$ be the $d_s$-supertail of $\Pa$, let $\widehat{ST}$ be its $d_{s+1}$-supertail, and 
assume that $|ST|=\sigma_q(d_s,d_{s-1})$. 
\begin{enumerate}
\item[(i)] If $2\leq s\leq3$, $d_s<2d_{s-1}$, and $d_{s+1}< 2d_s$, then
\[|\widehat{ST}|\geq \sigma_q(d_{s+1},d_{s})+\sigma_q(d_s,d_{s-1}).\]
\item[(ii)]If $d_s\geq 2d_{s-1}$, or if $s=3$ and $d_3=d_2+d_1$, then
\[|\widehat{ST}|\geq \sigma_q(d_{s+1},d_{s})+\sigma_q(d_s,d_{s-1})-1.\]
\end{enumerate}
\end{cor}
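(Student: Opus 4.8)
The plan is to derive Corollary~\ref{cor:main-cor} purely by combining the three ingredients already available: Theorem~\ref{thm:HLNS2}, Corollary~\ref{cor:HLNS2}, and Theorem~\ref{thm:main}. The key observation is that $\widehat{ST}$, the $d_{s+1}$-supertail, decomposes as the disjoint union $\widehat{ST}=ST\,\cup\,\bigl\{\,U\in\Pa:\dim U=d_s\,\bigr\}$, since a subspace of $\Pa$ has dimension less than $d_{s+1}$ precisely when it either has dimension less than $d_s$ (i.e.\ lies in $ST$) or has dimension exactly $d_s$. Hence $|\widehat{ST}|=|ST|+n_{d_s}=\sigma_q(d_s,d_{s-1})+n_{d_s}$, and everything reduces to bounding $n_{d_s}$ from below.

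For part~(i): with $2\le s\le 3$ and $d_s<2d_{s-1}$, and with the hypothesis $|ST|=\sigma_q(d_s,d_{s-1})$, Theorem~\ref{thm:main} applies (condition~(i) there, since $s-1\le 2$), so the union of the subspaces in $ST$ is a subspace $W$ whose dimension is described by cases~(a) and~(b). In either case $\dim W\ge d_{s-1}+d_1\le \cdots$; more to the point, $W$ is a $(\le d_s\!-\!0)$-dimensional flat that is itself partitioned by $ST$, and its complement behaves like a smaller subspace partition. The point is to look at the quotient-type argument: after quotienting out (or using the structure of) $W$, the $d_s$-subspaces of $\Pa$ together with the "hole'' $W$ must cover $V$, and since $\dim W<d_{s+1}$ and every other relevant subspace has dimension $\le d_s<d_{s+1}$, the collection $\{\,U\in\Pa:\dim U\ge d_s\,\}\cup\{W\}$ forms (the point set of) a subspace partition-like object in which the largest member has dimension $d_s$; applying the minimality bound $\sigma_q(\,\cdot\,,d_s)$ to the $d_{s+1}$-supertail of \emph{that} structure — equivalently, invoking Theorem~\ref{thm:HLNS2} again on the partition obtained by replacing $ST$ with the single subspace $W$ — yields $n_{d_s}+1\ge \sigma_q(d_{s+1},d_s)$, i.e.\ $n_{d_s}\ge \sigma_q(d_{s+1},d_s)-1$. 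That alone would only give the weaker bound; the extra $+1$ in~(i) comes from the fact that in cases~(a)/(b) of Theorem~\ref{thm:main} we also know $n_{d_{s-1}}$ exactly (it is $q^{d_1}+1$ or $q^{d_2}$), so $W$ is \emph{not} an actual member of $\Pa$ but a genuine union of several subspaces, and one must check that the replacement partition (with $W$ in place of $ST$) has $d_{s+1}$-supertail of size exactly $\sigma_q(d_{s+1},d_s)$ forcing the count to gain one, hence $|\widehat{ST}|=\sigma_q(d_{s+1},d_s)+\sigma_q(d_s,d_{s-1})$, with the inequality $\ge$ recorded.

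For part~(ii): if $d_s\ge 2d_{s-1}$, then Corollary~\ref{cor:HLNS2} applies directly and the union of the subspaces in $ST$ is a single $d_s$-subspace $W$. Now form the partition $\Pa'=(\Pa\setminus ST)\cup\{W\}$ of $V$; in $\Pa'$ the subspaces of dimension $\le d_s$ are exactly the $d_s$-subspaces, namely the old $n_{d_s}$ of them together with $W$, so the $d_{s+1}$-supertail of $\Pa'$ has size $n_{d_s}+1$ and its members have maximum dimension $d_s$. By Theorem~\ref{thm:HLNS2} applied to $\Pa'$ (with $d_s$ playing the role of $d_{s-1}$), we get $n_{d_s}+1\ge \sigma_q(d_{s+1},d_s)$, hence $|\widehat{ST}|=\sigma_q(d_s,d_{s-1})+n_{d_s}\ge \sigma_q(d_{s+1},d_s)+\sigma_q(d_s,d_{s-1})-1$. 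The sub-case $s=3$, $d_3=d_2+d_1$ is handled the same way via case~(b) of Theorem~\ref{thm:main}: there $ST$'s union is a subspace $W$ of dimension $d_1+d_2=d_3=d_s$, so again $W$ is a genuine $d_s$-subspace and the identical replacement argument gives the bound.

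The main obstacle will be the bookkeeping in part~(i), specifically justifying the extra $+1$: one must argue that replacing $ST$ by the single flat $W$ produces a legitimate subspace partition whose relevant supertail is of size exactly $\sigma_q(d_{s+1},d_s)$ (not larger), which in turn hinges on knowing the exact values $n_{d_{s-1}}$ from Theorem~\ref{thm:main}(a)/(b) and on verifying the dimension condition~\eqref{eq:dim} still holds after the replacement. I would carry this out by case analysis on whether $s-1=1$ or $s-1=2$, using the explicit $\dim W$ and $n_{d_{s-1}}$ in each, and checking that $d_{s+1}<2d_s$ guarantees $W$ does not collide dimensionally with the larger subspaces of $\Pa$.
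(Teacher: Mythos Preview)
Your overall strategy---replace $ST$ by the single subspace $W$ to form $\Pa'=(\Pa\setminus ST)\cup\{W\}$ and then apply Theorem~\ref{thm:HLNS2} to the $d_{s+1}$-supertail of $\Pa'$---is exactly the paper's approach, and your treatment of part~(ii) is correct and matches the paper essentially verbatim.

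Your handling of part~(i), however, has a genuine gap: you have misidentified the source of the extra $+1$. It does \emph{not} come from ``knowing $n_{d_{s-1}}$ exactly'' or from $W$ being ``a genuine union of several subspaces'', nor does one need the $d_{s+1}$-supertail of $\Pa'$ to have size \emph{exactly} $\sigma_q(d_{s+1},d_s)$. The actual mechanism is that under the hypotheses of part~(i) one has $\dim W>d_s$ \emph{strictly}. Indeed, Theorem~\ref{thm:main}(a)/(b) gives $\dim W=2d_{s-1}>d_s$ when $s=2$, and $\dim W=d_1+d_2\ge d_s$ when $s=3$ (the last inequality by Lemma~\ref{lem:A}); the only way equality $\dim W=d_s$ can occur is $s=3$ with $d_3=d_1+d_2$, which is precisely the sub-case siphoned off into part~(ii). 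One then splits on whether $\dim W\ge d_{s+1}$. If so, $W$ is not in the $d_{s+1}$-supertail $\widehat{ST'}$ of $\Pa'$, hence $\widehat{ST'}$ consists solely of the $n_{d_s}$ old $d_s$-subspaces and Theorem~\ref{thm:HLNS2} gives $n_{d_s}\ge\sigma_q(d_{s+1},d_s)$ directly. If instead $\dim W<d_{s+1}$, then $W\in\widehat{ST'}$ and the largest dimension in $\widehat{ST'}$ is $\dim W$; Theorem~\ref{thm:HLNS2} yields $|\widehat{ST'}|\ge\sigma_q(d_{s+1},\dim W)$, and since $d_{s+1}<2d_s\le 2\dim W$ we get $\sigma_q(d_{s+1},\dim W)=q^{\dim W}+1>q^{d_s}+1=\sigma_q(d_{s+1},d_s)$, so $n_{d_s}=|\widehat{ST'}|-1\ge\sigma_q(d_{s+1},d_s)$. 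Either way $|\widehat{ST}|=n_{d_s}+|ST|\ge\sigma_q(d_{s+1},d_s)+\sigma_q(d_s,d_{s-1})$. Your proposed ``case analysis on whether $s-1=1$ or $s-1=2$'' and verification of the dimension condition would not by itself produce this; what is needed is the case split on $\dim W$ versus $d_{s+1}$ together with the strict inequality $\dim W>d_s$.
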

\begin{remark}\label{rmk:main-cor} 
Note that the condition $s=3$ in Corollary~\ref{cor:main-cor}(ii)
is equivalent to saying that $ST$ contains subspaces of two different dimensions, namely $d_1$ and $d_2$.
\end{remark}

Theorem~\ref{thm:main} can be viewed as a special case of the general question of determining 
nontrivial conditions under which a set of points of a projective space forms a subspace.
We conjecture that Theorem~\ref{thm:main} holds in all cases, and not just if $(i)$, $(ii)$, or $(iii)$ holds.

\bs The rest of the paper is organized as follows. In Section~\ref{sec:2}, we gather 
some known results that we shall use in Section~\ref{sec:3} to first establish some auxiliary results,  
and then to prove our main results, i.e., Theorem~\ref{thm:main} and Corollary~\ref{cor:main-cor}.
Finally, we include some supporting lemmas in Section~\ref{sec:4} (Appendix).
\section{Preliminaries}\label{sec:2}
Let $n$ be a positive integer and let $q$ be a prime power. Set $\ta _0=0$. For any integer $i$
such that $1\leq i\leq n$, let 
\[\ta _i=\frac{q^i-1}{q-1}\] 
denote the number of points (i.e.,$1$-subspaces) in an $i$-subspace.

We will need the following elementary results (Proposition~\ref{pro:num-hyperplanes} 
and Proposition~\ref{pro:is-d-dim}).
\begin{pro}\label{pro:num-hyperplanes}
The number of hyperplanes containing a given $d$-subspace of $V(n,q)$ is $\ta_{n-d}$.
\end{pro}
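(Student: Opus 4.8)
The plan is to reduce the count to a count of one-dimensional subspaces by passing to a quotient space. Let $U$ be the given $d$-subspace of $V=V(n,q)$. First I would observe that a hyperplane $H$ of $V$ contains $U$ if and only if $H/U$ is a hyperplane of the quotient space $V/U$, and that $H\mapsto H/U$ is a bijection from $\{H:\ U\subseteq H,\ \dim H=n-1\}$ onto the set of hyperplanes of $V/U$. Since $\dim(V/U)=n-d$, the problem is reduced to counting the hyperplanes of a vector space of dimension $n-d$.

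Next I would count the hyperplanes of an arbitrary vector space $W$ with $\dim W=m$. The clean way is via duality: sending a hyperplane $H=\ker\varphi$ to the line $\langle\varphi\rangle$ in the dual space $W^{*}$ is well defined, since a nonzero linear functional is determined up to a nonzero scalar by its kernel, and it is a bijection between the hyperplanes of $W$ and the $1$-subspaces of $W^{*}$. As $\dim W^{*}=m$, the number of such $1$-subspaces is the number of points of an $m$-subspace, namely $\ta_{m}=(q^{m}-1)/(q-1)$. Taking $m=n-d$ and combining with the first step yields $\ta_{n-d}$, as claimed.

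For such an elementary statement there is no real obstacle; the only point requiring a touch of care is the boundary behaviour of the formula, which should be checked to confirm consistency with the convention $\ta_0=0$: when $d=n$ there is no hyperplane containing $U=V$, matching $\ta_0=0$, and when $d=n-1$ the subspace $U$ is itself the unique hyperplane containing it, matching $\ta_1=1$. If one prefers to avoid quotients and dual spaces, an alternative is a direct proof via the Gaussian binomial coefficient $\binom{n-d}{1}_q$ or a short double count of incident pairs, but the quotient-and-duality argument is the most economical and is the route I would present.
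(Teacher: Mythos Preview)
Your argument is correct. The paper itself does not prove this proposition; it simply states it as an elementary fact and moves on. Your quotient-and-duality argument is a clean and complete justification, and your check of the boundary cases $d=n$ and $d=n-1$ is consistent with the paper's convention $\ta_0=0$. Nothing more is needed.
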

\begin{pro}\label{pro:is-d-dim}
If $U$ is a subset of $V=V(n,q)$ containing $\ta_d$ points and contained in precisely 
$\ta_{n-d}$ hyperplanes, then $U$ is a $d$-subspace of $V$. 
\end{pro}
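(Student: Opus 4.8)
The plan is to pass from the point set $U$ to its linear span and pin down the dimension of that span using the hyperplane count. Set $W=\langle U\rangle$, the subspace of $V$ spanned by the points of $U$, and let $e=\dim W$. The first observation is that a hyperplane $H$ contains $U$ as a point set if and only if it contains every linear combination of those points, i.e.\ if and only if $W\subseteq H$. Hence the collection of hyperplanes containing $U$ coincides exactly with the collection of hyperplanes containing $W$, so these two families have the same cardinality.

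Next I would compute that common cardinality two ways. By hypothesis the number of hyperplanes containing $U$ is $\ta_{n-d}$. On the other hand, since $W$ is an $e$-subspace, Proposition~\ref{pro:num-hyperplanes} gives that the number of hyperplanes containing $W$ is $\ta_{n-e}$. Equating the two counts yields $\ta_{n-e}=\ta_{n-d}$. Because $\ta_i=\frac{q^i-1}{q-1}$ is a strictly increasing function of $i$, this forces $n-e=n-d$, that is $e=d$. Thus $\langle U\rangle=W$ is a $d$-subspace of $V$.

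Finally I would close the gap between $U$ and $W$ by a counting argument. We have $U\subseteq W$ as point sets, the $d$-subspace $W$ contains exactly $\ta_d$ points, and $U$ itself contains $\ta_d$ points by hypothesis. A subset of a finite set with the same number of elements as the whole must equal the whole, so $U=W$, and therefore $U$ is a $d$-subspace, as claimed. There is no serious obstacle here; the only points that require care are the two elementary facts invoked along the way, namely that a hyperplane contains $U$ precisely when it contains $\langle U\rangle$, and that $i\mapsto\ta_i$ is strictly increasing so that the equality $\ta_{n-e}=\ta_{n-d}$ genuinely pins down $e=d$.
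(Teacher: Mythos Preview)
Your argument is correct. The paper does not actually supply a proof of this proposition; it is stated alongside Proposition~\ref{pro:num-hyperplanes} as an elementary result and left unproved. Your span-and-count approach is exactly the standard justification: identifying the hyperplanes through $U$ with those through $\langle U\rangle$, reading off $\dim\langle U\rangle=d$ from Proposition~\ref{pro:num-hyperplanes} and the injectivity of $i\mapsto\ta_i$, and then using the point count to force $U=\langle U\rangle$.
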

To state the next lemmas, we need the following definitions.
For $n\geq 2$, let $\Pa$ be a subspace partition of $V=V(n,q)$ of type $[d_1^{n_{d_1}},\ldots,d_m^{n_{d_m}}]$.
For any hyperplane $H$ of $V$, let $b_{H,x}$ be the number of $x$-subspaces in $\Pa$ that are contained in $H$ and set $b_H=[b_{H,d_1},\ldots,b_{H,d_m}]$.  Define
\[\Ba=\{b_H:\; \mbox{$H$ is a hyperplane  of $V$}\}.\]
For any  $b \in \Ba$, let $s_{b}$ denote the number of hyperplanes $H$ of $V$ such that $b_H=b$.

\bs We will use Lemma~\ref{lem:HeLe1} and Lemma~\ref{lem:HeLe0} by Heden and Lehmann~\cite{HeLe}.
\begin{lemma}\label{lem:HeLe1}
Let $\Pa$ be a subspace partition of $V(n,q)$, and let $\Ba$ and $s_b$ be as defined earlier.
If $\Pa$ contains two different subspaces, one of dimension $d$ and another of dimension $d'$, 
with $1\leq d,d'\leq n-2$, then

\n $(i)$ $\sum\limits_{b\in \Ba} s_b=\ta_n$,

\n $(ii)$ $\sum\limits_{b\in \Ba}b_d\; s_b=n_d \ta_{n-d}$,

\n $(iii)$ $\sum\limits_{b\in \Ba}{b_d\choose 2}s_b={n_d\choose 2}\ta_{n-2d}$,

\n $(iv)$ $\sum\limits_{b\in \Ba}b_db_{d'}s_b=n_d n_{d'}\ta_{n-d-d'}$.
\end{lemma}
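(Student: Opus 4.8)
The plan is to establish all four identities by double counting incidences between the hyperplanes of $V$ and suitable tuples of subspaces taken from $\Pa$: I evaluate each count once by summing over hyperplanes (which, after grouping hyperplanes according to the common value $b=b_H$, produces the left-hand sides through the fiber sizes $s_b$) and once by summing over the relevant subspaces of $\Pa$ (which produces the right-hand sides through Proposition~\ref{pro:num-hyperplanes}). Throughout, for a value $b=b_H=[b_{H,d_1},\ldots,b_{H,d_m}]$ I write $b_d$ for its $d$-component $b_{H,d}$, the number of $d$-subspaces of $\Pa$ contained in $H$.

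For $(i)$ I would simply observe that the fibers $\{H:\,b_H=b\}$, as $b$ ranges over $\Ba$, partition the set of all hyperplanes of $V$; hence $\sum_{b\in\Ba}s_b$ counts each hyperplane exactly once, and the total number of hyperplanes of $V(n,q)$ is $\ta_n$. For $(ii)$ I would count the incidence set of pairs $(H,U)$ with $H$ a hyperplane and $U$ a $d$-subspace of $\Pa$ satisfying $U\subseteq H$. Summing over $H$ and then collecting the hyperplanes in each fiber over $b$ gives $\sum_{b\in\Ba}b_d\,s_b$, since every hyperplane in that fiber contains exactly $b_d$ such subspaces. Summing instead over $U$ and invoking Proposition~\ref{pro:num-hyperplanes}, each of the $n_d$ $d$-subspaces lies in exactly $\ta_{n-d}$ hyperplanes, yielding $n_d\,\ta_{n-d}$, and equating the two evaluations proves $(ii)$.

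For $(iii)$ and $(iv)$ the same template applies, but the decisive extra input is the defining property of a subspace partition: any two distinct members of $\Pa$ meet only in the zero vector. For $(iii)$ I would count triples $(H,\{U_1,U_2\})$ where $\{U_1,U_2\}$ is an unordered pair of distinct $d$-subspaces of $\Pa$ with $U_1,U_2\subseteq H$; summing over $H$ gives $\sum_{b\in\Ba}{b_d\choose 2}s_b$. Summing instead over the ${n_d\choose 2}$ pairs, I would note that $U_1\cap U_2=\{0\}$ forces $\dim(U_1+U_2)=2d$, so the hyperplanes containing both $U_1$ and $U_2$ are exactly those containing the $2d$-subspace $U_1+U_2$, of which there are $\ta_{n-2d}$ by Proposition~\ref{pro:num-hyperplanes}. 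The argument for $(iv)$ is identical with ordered pairs $(U,U')$ consisting of a $d$-subspace and a $d'$-subspace (automatically distinct when $d\neq d'$), where trivial intersection gives $\dim(U+U')=d+d'$ and hence $\ta_{n-d-d'}$ containing hyperplanes; this produces $\sum_{b\in\Ba}b_d b_{d'}s_b=n_d n_{d'}\ta_{n-d-d'}$.

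The only point requiring care — and the closest thing to an obstacle — is ensuring that $\ta_{n-2d}$ and $\ta_{n-d-d'}$ are meaningful, i.e., that $2d\le n$ and $d+d'\le n$, so that Proposition~\ref{pro:num-hyperplanes} applies verbatim. For $(iii)$, when $n_d\le 1$ both sides vanish (since $b_d\le n_d$), and when $n_d\ge 2$ the dimension condition~\eqref{eq:dim} guarantees $n\ge 2d$; for $(iv)$, the hypothesis that $\Pa$ contains both a $d$-subspace and a $d'$-subspace with $d\neq d'$ forces $n_d+n_{d'}\ge 2$, so~\eqref{eq:dim} again gives $n\ge d+d'$. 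Once these ranges are secured, each identity follows immediately by equating the two counts.
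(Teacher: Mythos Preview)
Your proof is correct and matches the approach the paper attributes to Heden and Lehmann: the paper does not prove Lemma~\ref{lem:HeLe1} itself but cites~\cite{HeLe} and explicitly describes the argument as ``counting in two ways tuples of the forms $(H,U)$ and $(H,W_1,W_2)$'', which is precisely the double-counting scheme you carry out. Your handling of the dimension constraints via~\eqref{eq:dim} and the degenerate cases $n_d\le 1$ is appropriate and completes the details the paper leaves implicit.
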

\begin{lemma}\label{lem:HeLe0}
Let $\Pa$ be a subspace partition of $V(n,q)$. If $H$ is a hyperplane of $V$, then
\[|\Pa|=1+\sum\limits_{i=1}^{m} b_{H,d_i}q^{d_i}.\]
\end{lemma}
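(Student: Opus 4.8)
The plan is to double-count the points (i.e., $1$-subspaces) of $V$ that lie \emph{outside} the fixed hyperplane $H$. First I would record that the number of points of $V$ not contained in $H$ is $\ta_n-\ta_{n-1}=q^{n-1}$. Since $\Pa$ partitions the nonzero vectors of $V$, each such point lies in exactly one member of $\Pa$, so these $q^{n-1}$ points are distributed among the subspaces of $\Pa$ that are not contained in $H$.

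Next I would compute the contribution of a single subspace $U\in\Pa$ to the points lying outside $H$. If $U\subseteq H$, it contributes nothing. If $U\not\subseteq H$, then $U\cap H$ is a hyperplane of $U$ (because $H$ is a hyperplane of $V$ and $U$ is not contained in it), so $\dim(U\cap H)=\dim U-1$, and the number of points of $U$ outside $H$ is $\ta_{\dim U}-\ta_{\dim U-1}=q^{\dim U-1}$. This formula is uniform across all dimensions; in particular it covers the case $\dim U=1$, $U\not\subseteq H$, where $U$ contributes its single point while $U\cap H=\{0\}$.

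Grouping the subspaces not contained in $H$ by dimension, there are $n_{d_i}-b_{H,d_i}$ of dimension $d_i$, so the double count gives $q^{n-1}=\sum_{i=1}^m(n_{d_i}-b_{H,d_i})q^{d_i-1}$, equivalently $q^n=\sum_{i=1}^m n_{d_i}q^{d_i}-\sum_{i=1}^m b_{H,d_i}q^{d_i}$. Finally I would invoke the packing condition \eqref{eq:pack}, which, together with $|\Pa|=\sum_{i=1}^m n_{d_i}$, rearranges to $\sum_{i=1}^m n_{d_i}q^{d_i}=q^n-1+|\Pa|$. Substituting this into the previous identity and cancelling $q^n$ yields $|\Pa|=1+\sum_{i=1}^m b_{H,d_i}q^{d_i}$, as claimed.

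Because the argument is entirely elementary, there is no substantial obstacle here; the only points demanding care are the verification that $U\cap H$ is a hyperplane of $U$ whenever $U\not\subseteq H$, so that the per-subspace contribution $q^{\dim U-1}$ is valid uniformly (including for $1$-subspaces that meet $H$ only at the origin), and the correct bookkeeping in passing from the packing condition to the stated identity.
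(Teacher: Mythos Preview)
Your argument is correct. Note, however, that the paper does not actually supply its own proof of this lemma: it is quoted as a known result of Heden and Lehmann~\cite{HeLe} and used without proof. Your double count of points outside $H$, combined with the packing condition~\eqref{eq:pack}, is precisely the standard elementary derivation, so there is nothing to compare against in the paper itself.
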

We will also use the following lemma due to Heden et al.~\cite{HLNS1}.
\begin{lemma}\label{lem:pts}
Let $\Pa$ be a subspace partition of $V(n,q)$ of type $[d_1^{n_1},\ldots,d_m^{n_m}]$ and let $2\le s\le m$.
If $ST$ is a $d_s$-supertail of $\Pa$ and $H$ is a hyperplane of $V$, then 
\[\sum_{i=1}^{s-1}(n_{d_i}-b_{H,d_i})q^{d_i}=c_H\cdot q^{d_s},\]
where $c_H=q^{n-d_s}-\sum\limits_{i=s}^m(n_{d_i}-b_{H,d_i})q^{d_i-d_s}$ is a nonnegative integer.
\end{lemma}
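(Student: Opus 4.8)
The plan is to derive a single ``master identity'' by double counting the nonzero vectors of $V$ that lie outside the fixed hyperplane $H$, and then to split that identity at the threshold dimension $d_s$ and rearrange.

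First I would count the nonzero vectors of $V$ not contained in $H$. Directly, there are $(q^n-1)-(q^{n-1}-1)=q^{n-1}(q-1)$ of them. Counting instead through the partition $\Pa$: a subspace $U\in\Pa$ with $U\subseteq H$ contributes none, while a subspace $U$ with $U\not\subseteq H$ meets $H$ in a hyperplane $U\cap H$ of $U$, hence contributes $(q^{\dim U}-1)-(q^{\dim U-1}-1)=q^{\dim U-1}(q-1)$ of its nonzero vectors. Since exactly $n_{d_i}-b_{H,d_i}$ of the $d_i$-subspaces of $\Pa$ are not contained in $H$, grouping by dimension gives
\[q^{n-1}(q-1)=\sum_{i=1}^{m}(n_{d_i}-b_{H,d_i})\,q^{d_i-1}(q-1).\]
Cancelling $q-1$ and multiplying by $q$ yields the master identity
\[q^{n}=\sum_{i=1}^{m}(n_{d_i}-b_{H,d_i})\,q^{d_i},\]
which I will call $(\ast)$. (Alternatively, $(\ast)$ can be obtained by combining the packing condition~\eqref{eq:pack} with Lemma~\ref{lem:HeLe0}.)

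Next I would split $(\ast)$ at the index $s$ and transfer the tail part to the right-hand side:
\[\sum_{i=1}^{s-1}(n_{d_i}-b_{H,d_i})\,q^{d_i}=q^{n}-\sum_{i=s}^{m}(n_{d_i}-b_{H,d_i})\,q^{d_i}.\]
Because $d_i\ge d_s$ for every $i\ge s$, the factor $q^{d_s}$ divides each term on the right, so factoring it out gives exactly
\[\sum_{i=1}^{s-1}(n_{d_i}-b_{H,d_i})\,q^{d_i}=q^{d_s}\Bigl(q^{\,n-d_s}-\sum_{i=s}^{m}(n_{d_i}-b_{H,d_i})\,q^{\,d_i-d_s}\Bigr)=c_H\cdot q^{d_s},\]
which is the asserted equation with $c_H$ as defined.

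It remains to check the two properties of $c_H$, and this is where the only (mild) subtlety lies. Integrality is immediate from the closed form $c_H=q^{\,n-d_s}-\sum_{i=s}^{m}(n_{d_i}-b_{H,d_i})\,q^{\,d_i-d_s}$, since the exponents $n-d_s$ and $d_i-d_s$ ($i\ge s$) are all nonnegative integers. For nonnegativity I would not use this form but instead read $c_H$ off the left-hand side: $c_H=q^{-d_s}\sum_{i=1}^{s-1}(n_{d_i}-b_{H,d_i})\,q^{d_i}\ge 0$, because each coefficient $n_{d_i}-b_{H,d_i}$ is nonnegative (the number $b_{H,d_i}$ of $d_i$-subspaces of $\Pa$ inside $H$ cannot exceed their total number $n_{d_i}$). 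There is no genuine obstacle in this argument: it is a counting identity followed by an elementary factorization, and the one point worth flagging is that nonnegativity of $c_H$ is visible only from the supertail side of the equation, not from its closed-form expression.
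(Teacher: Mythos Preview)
Your argument is correct. The double count of vectors outside $H$ is sound, the master identity $(\ast)$ follows, and your handling of integrality and nonnegativity of $c_H$ is clean; in particular, reading nonnegativity off the supertail side is exactly the right observation.

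As for comparison: the present paper does not prove Lemma~\ref{lem:pts} at all---it is quoted from~\cite{HLNS1} without argument. Your proof is in fact the standard one, and the alternative you mention (subtracting Lemma~\ref{lem:HeLe0} from the packing condition~\eqref{eq:pack}) is the same identity obtained algebraically rather than by a direct count. Either route is a one-line derivation once the pieces are in place, so there is no substantive difference in approach to report.
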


Finally, we will need the following lemma  due to Herzog and Sch\"onheim \cite{HeSc}, and independently Beutelspacher~\cite{Be} and Bu~\cite{Bu}.
\begin{lemma}\label{lem:Be}
Let $n$ and $d$ be integers such that $1\leq d \leq n/2$. Then $V(n,q)$ admits 
a partition with one subspace of dimension $n-d$ and $q^{n-d}$ subspaces of dimension $d$.
\end{lemma}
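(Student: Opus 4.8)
The plan is to prove the lemma by an explicit construction, exhibiting a partition of the stated type directly. As a preliminary sanity check, the asserted type (one $(n-d)$-subspace together with $q^{n-d}$ subspaces of dimension $d$) satisfies the packing condition~\eqref{eq:pack}, since $(q^{n-d}-1)+q^{n-d}(q^{d}-1)=q^{n}-1$; so the only real content is to produce the subspaces.

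First I would decompose $V=V(n,q)$ as an internal direct sum $V=A\oplus B$ with $\dim A=n-d$ and $\dim B=d$, and equip $A$ with the structure of the field $\mathbb{F}_{q^{\,n-d}}$, regarded as an $\mathbb{F}_q$-vector space of dimension $n-d$. The subspace $A=A\oplus\{0\}$ will be the unique $(n-d)$-subspace of the partition. Because $d\le n/2$, we have $\dim B=d\le n-d=\dim A$, so there exists an $\mathbb{F}_q$-linear injection $\iota\colon B\hookrightarrow A$; this is precisely where the hypothesis $d\le n/2$ enters. Then, for each scalar $c\in\mathbb{F}_{q^{\,n-d}}=A$, I would define
\[
D_c=\{\,(c\cdot\iota(b),\,b)\,:\,b\in B\,\}\subseteq A\oplus B=V ,
\]
the graph of the $\mathbb{F}_q$-linear map $b\mapsto c\,\iota(b)$; each $D_c$ has dimension $d$, and there are exactly $q^{\,n-d}=|\mathbb{F}_{q^{\,n-d}}|$ of them.

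It then remains to verify that $\{A\}\cup\{D_c:c\in\mathbb{F}_{q^{\,n-d}}\}$ partitions $V^{*}$. A vector $(a,b)$ with $b=0$ lies in $A$, and lies in no $D_c$ (the equation $(c\,\iota(b),b)=(a,0)$ forces $b=0$, hence $a=0$). For $(a,b)$ with $b\ne 0$, injectivity of $\iota$ gives $\iota(b)\ne 0$, so $\iota(b)$ is invertible in the field $\mathbb{F}_{q^{\,n-d}}$, and $(a,b)\in D_c$ holds iff $a=c\,\iota(b)$ iff $c=a\,\iota(b)^{-1}$; thus $(a,b)$ lies in exactly one $D_c$. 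The same computation yields $A\cap D_c=\{0\}$ and $D_c\cap D_{c'}=\{0\}$ for $c\ne c'$, so every nonzero vector of $V$ is covered exactly once, completing the construction.

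The main point to watch, rather than a genuine obstacle, is the reduction itself: since each $D_c$ meets $A$ trivially and has dimension $d=n-(n-d)$, it is forced to be a complement of $A$, hence the graph of a linear map $B\to A$, and the partition requirement becomes the injectivity of the differences of these maps. Multiplication by field elements delivers such a family automatically, because $(c-c')\,\iota(b)=0$ with $c\ne c'$ forces $\iota(b)=0$. The one step that truly needs the hypotheses is the invertibility of $\iota(b)$ for $b\ne 0$, which relies both on the field structure imposed on $A$ and on the inequality $d\le n-d$; without $d\le n/2$ no injection $\iota$ exists and the argument breaks down.
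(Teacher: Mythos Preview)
Your construction is correct. The paper does not actually supply a proof of this lemma; it is stated as a known result attributed to Herzog--Sch\"onheim~\cite{HeSc} and independently Beutelspacher~\cite{Be} and Bu~\cite{Bu}, so there is no in-paper argument to compare against. Your approach---taking a direct-sum decomposition $V=A\oplus B$, imposing the field structure $\mathbb{F}_{q^{n-d}}$ on $A$, and using the graphs $D_c$ of the maps $b\mapsto c\,\iota(b)$---is precisely the classical construction underlying those references, and every step (in particular the use of $d\le n-d$ to guarantee an injective $\iota$, and the invertibility of $\iota(b)$ in the field for $b\ne 0$) is handled correctly.
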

\section{Auxiliary Results and the proof of the main theorem}\label{sec:3}
In this section, we use $\Ha$ to denote the set of all hyperplanes of $V$.

\begin{lemma}\label{lem:A} 
Let $\Pa$ be a subspace partition of $V=V(n,q)$
of type $[d_1^{n_{d_1}},\ldots,d_m^{n_{d_m}}]$, where $1\leq d_1<\ldots<d_m$. Assume that $2\le s\le m$, and let $ST$ be a $d_s$-supertail of $\Pa$ such that $|ST|=\sigma_q(d_s,d_{s-1})$ and $d_s< 2d_{s-1}$. Then $d_s\leq d_{s-1}+d_1$.
\end{lemma}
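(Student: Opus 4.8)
The plan is to reduce to $s\ge 3$, determine $|ST|$ exactly, and then argue by contradiction, combining a reduction to a hyperplane with the counting of Lemma~\ref{lem:pts}.

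If $s=2$ the assertion is immediate: $ST$ then consists of the $d_1$-subspaces and $d_{s-1}=d_1$, so $d_s<2d_{s-1}$ already reads $d_s<2d_1=d_{s-1}+d_1$. Assume henceforth $s\ge 3$, so $d_1<d_{s-1}$; since a subspace partition admitting a $d_s$-supertail cannot have $V$ as a member, $d_m\le n-1$ and hence $d_s\le n-1$. Because $d_{s-1}<d_s<2d_{s-1}$ and $d_s>d_{s-1}\ge d_2\ge 2$, Theorem~\ref{thm:HLNS1} gives $\sigma_q(d_s,d_{s-1})=q^{d_{s-1}}+1$, so the extremality hypothesis becomes $\sum_{i=1}^{s-1}n_{d_i}=|ST|=q^{d_{s-1}}+1$; in particular $ST$ still has least dimension $d_1$.

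Suppose, for contradiction, that $d_s\ge d_{s-1}+d_1+1$; then $d_{s-1}+d_1\le n-1$ (else $d_s\le n-1\le d_{s-1}+d_1-1$). The engine is Lemma~\ref{lem:pts}: for every hyperplane $H$, $\sum_{i<s}(n_{d_i}-b_{H,d_i})q^{d_i}=c_Hq^{d_s}$ with $c_H\ge 0$, and $c_H=0$ exactly when $ST\subseteq H$; when $c_H\ge 1$, bounding $q^{d_i}\le q^{d_{s-1}}$ forces at least $q^{d_s-d_{s-1}}\ge q^{d_1+1}$ members of $ST$ to avoid $H$, so that $b_H:=\sum_{i<s}b_{H,d_i}\le q^{d_{s-1}}+1-q^{d_s-d_{s-1}}$. \textbf{First}, suppose $\langle ST\rangle\subsetneq V$ and choose a hyperplane $H$ with $\langle ST\rangle\subseteq H$ which, when possible, also contains every $d_s$-subspace of $\Pa$. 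In the induced partition $\Pa_H=\{W\cap H:\ W\in\Pa,\ W\cap H\ne 0\}$ of $H\cong V(n-1,q)$ every member of $ST$ reappears unchanged, while every member of $\Pa\setminus ST$ yields a subspace of dimension $\ge d_s-1>d_{s-1}$; hence $ST$ is the $d'$-supertail of $\Pa_H$, where $d'\in\{d_s-1,d_s\}$ is the least dimension exceeding $d_{s-1}$ in $\Pa_H$. Since $d_{s-1}<d'\le d_s<2d_{s-1}$ we have $\sigma_q(d',d_{s-1})=q^{d_{s-1}}+1=|ST|$, so Lemma~\ref{lem:A} applies to $\Pa_H$ by induction on $n$ and gives $d'\le d_{s-1}+d_1$. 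If the choice achieved $d'=d_s$ — which is possible precisely when $ST$ together with the $d_s$-subspaces fails to span $V$ — this contradicts $d_s\ge d_{s-1}+d_1+1$; otherwise $d'=d_s-1$, and we are reduced to the single leftover possibility $d_s=d_{s-1}+d_1+1$ with $\langle ST\cup\{d_s\text{-subspaces}\}\rangle=V$.

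It remains to dispose of that leftover case and of the case $\langle ST\rangle=V$, in both of which every hyperplane has $c_H\ge 1$, so $b_H\le q^{d_{s-1}}+1-q^{d_s-d_{s-1}}$ for \emph{all} $H$. I would contradict this by a global double count: on one hand $\sum_H b_H=\sum_{i<s}n_{d_i}\ta_{n-d_i}=(q^{d_{s-1}}+1)\ta_{n-d_{s-1}}+q^{n-d_{s-1}}\sum_{i<s}n_{d_i}\ta_{d_{s-1}-d_i}$, while on the other hand the per-hyperplane bound gives $\sum_H b_H\le\ta_n\bigl(q^{d_{s-1}}+1-q^{d_s-d_{s-1}}\bigr)$; to beat the crude version of this comparison (which only recovers $d_s\le 2d_{s-1}$) one must improve the estimate $q^{d_i}\le q^{d_{s-1}}$ for the many members of $ST$ of dimension $d_1<d_{s-1}$, feeding in the identity $|\Pa|=1+\sum_i b_{H,d_i}q^{d_i}$ of Lemma~\ref{lem:HeLe0} and parts~(ii)--(iv) of Lemma~\ref{lem:HeLe1} together with the exact value $|ST|=q^{d_{s-1}}+1$. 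This last step — extracting the contradiction in the ``spanning'' case, where no hyperplane isolates $ST$ and the reduction to $V(n-1,q)$ is unavailable — is the part I expect to be the main obstacle, and it is exactly where the hypothesis $|ST|=\sigma_q(d_s,d_{s-1})$ (as opposed to $\ge$) is used in full strength.
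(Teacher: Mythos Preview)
Your proof is incomplete, and the gap is essential rather than cosmetic. You yourself flag the ``spanning'' case as the main obstacle and do not carry out the double count; without it the induction on $n$ does not close. There is also an error in the setup of that final step: in your ``leftover'' case you still have $\langle ST\rangle\subsetneq V$, so there \emph{is} a hyperplane $H\supseteq ST$ with $c_H=0$, contrary to your claim that both residual cases have $c_H\ge1$ for every $H$. And the sketched global double count, as you acknowledge, only recovers $d_s\le 2d_{s-1}$; nothing in your outline shows how to extract the sharper bound $d_s\le d_{s-1}+d_1$ from Lemmas~\ref{lem:HeLe1} and~\ref{lem:HeLe0} alone.

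The paper bypasses all of this with a single, non-inductive reduction. Instead of passing to a hyperplane, fix $W\in ST$ of dimension $d_1$ and $U\in ST$ of dimension $d_{s-1}$, and take a complement $V'$ of $W$ in $V$ that contains $U$ (so $\dim V'=n-d_1$, $W\cap V'=\{0\}$, $U\subseteq V'$). In the induced partition $\Pa'$ of $V'$, the member $W$ disappears entirely while $U$ survives intact; hence the new supertail $ST'$ still has top dimension $d_{s-1}$ but size at most $|ST|-1=q^{d_{s-1}}$. The contradiction hypothesis $d_s>d_{s-1}+d_1$ is exactly what guarantees $\dim(X\cap V')\ge d_s-d_1>d_{s-1}$ for every $X\in\Pa\setminus ST$, so $ST'$ really is the full supertail of $\Pa'$. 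One application of Theorem~\ref{thm:HLNS2} (the inequality $|ST'|\ge\sigma_q(\cdot,d_{s-1})\ge q^{d_{s-1}}+1$) then finishes. The idea you are missing is to cut down by codimension $d_1$ rather than $1$: this kills a supertail member outright and reduces the problem to a single invocation of the supertail lower bound, with no induction, no case split on $\langle ST\rangle$, and no double counting.
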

\begin{proof}
Suppose that $d_s> d_{s-1}+d_1$.
Let $U,W\in ST$ be such that $\dim U=d_{s-1}$ and $\dim W=d_1$.
Let $B_W$ be a basis of $W$, $B_U$ a basis of $U$, and consider 
a basis $B$ of $V$ obtained by extending $B_U\cup B_W$. Then 
$V'=\mbox{span}(B\setminus B_W)$ is a subspace of $V$ such that $\dim V'=n-d_1$, $U\subseteq V'$,
and $V'\cap W=\emptyset$.
Now let $\mathcal{P}'$ be the subspace partition induced by $\mathcal{P}$ in $V'$,
and let $ST'=\{A\cap V'\not=\{\mathbf 0\}:\; A\in ST\}$, where $\mathbf 0$ denotes the zero vector.
Let $X'$ be a subspace in $\mathcal{P}'\setminus ST'$ that has a minimum possible dimension. 
Since $X'=X\cap V'$ for some $X\in \mathcal{P}\setminus ST$ and $\dim X\geq d_s>d_{s-1}+d_1$, it follows that $\dim X'\geq \dim X-d_1>d_{s-1}$.
Moreover, the subspace $U'=U\cap V'=U$ is in $ST'$ and $W'=W\cap V'=\{\mathbf 0\}$.
Thus, $ST'$ is a supertail of  $\mathcal{P}'$ with highest dimension $d_{s-1}=\dim U'$ and 
of size $|ST'|\leq |ST\setminus\{W\}|\leq |ST|-1= q^{d_{s-1}}$. This contradicts the fact that $|ST'|\geq \sigma_q(\dim X',d_{s-1})=q^{d_{s-1}}+1$. The lemma follows.
\end{proof}
\begin{lemma}\label{lem:C} 
Let $\Pa$ be a partition of $V(n,q)$ with supertail $ST$ consisting of subspaces of dimensions 
at most $t$. For any $H\in \Ha$, let $\beta_H=\sum_{i\leq t}b_{H,i} q^{i}$ and let $\beta_0=\min_{H\in\Ha} \beta_H$. 
Then $|ST|\geq \beta_0+1$.
\end{lemma}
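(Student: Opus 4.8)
The plan is to bound $|ST|$ from below using a single hyperplane $H$ whose contribution $\beta_H$ to the count of points covered by $ST$ is as small as possible. First I would fix any hyperplane $H\in\Ha$ and invoke Lemma~\ref{lem:HeLe0}, which gives
\[
|\Pa|=1+\sum_{i=1}^{m}b_{H,d_i}q^{d_i}=1+\beta_H+\sum_{i:\,d_i>t}b_{H,d_i}q^{d_i}.
\]
Next I would argue that the subspaces of $\Pa$ of dimension greater than $t$ (that is, those in $\Pa\setminus ST$) cover the same set of points regardless of which hyperplane we restrict to — more precisely, I want to relate $|\Pa\setminus ST|$ to the terms $b_{H,d_i}q^{d_i}$ for $d_i>t$. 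The cleanest way is to count points: $V^*$ is the disjoint union of the point sets of the subspaces in $\Pa$, so $\sum_i n_{d_i}\ta_{d_i}=\ta_n$; restricting to $H$, the subspaces of $\Pa$ not contained in $H$ each meet $H$ in a subspace of one lower dimension (or trivially), so $H^*$ is covered accordingly. Equivalently, apply Lemma~\ref{lem:pts} (or the point-counting argument behind it) to separate the ``small'' part (dimensions $\le t$) from the ``large'' part.

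The key point is that $|ST|=\sum_{i:\,d_i\le t} n_{d_i}$ counts the small subspaces of $\Pa$, and each such subspace, if it is contained in $H$, contributes $1$ to $b_{H,d_i}$, while if it is not contained in $H$ it contributes $0$. So $|ST|\ge \sum_{i:\,d_i\le t} b_{H,d_i}$. To get the stated bound I would instead observe that $\beta_H=\sum_{i\le t}b_{H,i}q^i$ and that, since each summand satisfies $q^i\ge 1$ but more usefully $q^i\ge 1$, we cannot directly drop the $q^i$; instead the argument should go the other way. Thus the right approach: from Lemma~\ref{lem:HeLe0} applied to a hyperplane $H_0$ achieving $\beta_{H_0}=\beta_0$, and from the fact that $|\Pa|-|ST|=|\Pa\setminus ST|$ with every subspace in $\Pa\setminus ST$ contributing at most its share of points of $H_0$, I would derive $|ST|\ge |\Pa|-(\text{large part}) = 1+\beta_0+(\text{large part})-(\text{large part})$. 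The honest version is: the number of large subspaces contained in $H_0$, weighted by $q^{d_i}$, is $|\Pa|-1-\beta_0$; and since the large subspaces not contained in any fixed hyperplane would force dimension constraints, one compares with the generic hyperplane to conclude $\sum_{i:\,d_i>t} b_{H_0,d_i}q^{d_i}\ge |\Pa\setminus ST| - |ST|$ or similar — I would chase the exact inequality by writing $|\Pa|=|ST|+|\Pa\setminus ST|$ and matching terms.

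The main obstacle I anticipate is pinning down exactly how the ``large'' subspaces behave across hyperplanes: a subspace $X$ with $\dim X>t$ is contained in $\ta_{n-\dim X}$ of the $\ta_n$ hyperplanes, so it is \emph{not} contained in most hyperplanes, and the bookkeeping in Lemma~\ref{lem:HeLe0} for the extremal hyperplane $H_0$ requires care to avoid over- or under-counting. I would resolve this by averaging: summing Lemma~\ref{lem:HeLe0} over all $H\in\Ha$ and using Lemma~\ref{lem:HeLe1}(i)--(ii) to evaluate $\sum_H b_{H,d_i}=n_{d_i}\ta_{n-d_i}$, which yields $\ta_n|\Pa| = \ta_n + \sum_i n_{d_i}\ta_{n-d_i}q^{d_i}$; then isolating the $d_i\le t$ contribution and comparing it termwise with $\beta_0\le \beta_H$ gives $\ta_n(|ST|-1)\ge \ta_n\beta_0$ after noting $n_{d_i}\ta_{n-d_i}q^{d_i}\ge n_{d_i}q^{d_i}\cdot(\text{something})$ — but actually the slicker finish is just: from Lemma~\ref{lem:HeLe0}, $|ST| = |\Pa| - |\Pa\setminus ST| = 1+\beta_H+\sum_{d_i>t}b_{H,d_i}q^{d_i} - |\Pa\setminus ST| \ge 1+\beta_H$, because $\sum_{d_i>t}b_{H,d_i}q^{d_i}\ge \sum_{d_i>t}b_{H,d_i}\cdot q^{d_i}$ and there are at least $\sum_{d_i>t}b_{H,d_i}$ large subspaces overall while each contributes $q^{d_i}\ge 1$... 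Taking $H=H_0$ gives $|ST|\ge 1+\beta_0$, as desired. I would write up the clean termwise comparison $\sum_{d_i>t} b_{H_0,d_i} q^{d_i} \ge \sum_{d_i>t} b_{H_0,d_i} = |\{X\in\Pa\setminus ST:\,X\subseteq H_0\}| $ is at most $|\Pa\setminus ST|$, so $|ST| = 1 + \beta_0 + \sum_{d_i>t}b_{H_0,d_i}q^{d_i} - |\Pa \setminus ST| \geq 1+\beta_0$ provided $\sum_{d_i>t} b_{H_0,d_i}(q^{d_i}-1) \geq |\Pa\setminus ST| - \sum_{d_i>t}b_{H_0,d_i}$, which holds since each $q^{d_i}-1 \geq q^{t} \geq q$ is large while the right side is bounded by the number of large subspaces not in $H_0$; I will make this rigorous by choosing $H_0$ through one of the large subspaces, forcing the deficit term to vanish.
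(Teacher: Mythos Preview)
Your single-hyperplane approach has a genuine gap. From Lemma~\ref{lem:HeLe0} you correctly obtain $|\Pa|=1+\beta_{H_0}+L_{H_0}$ with $L_{H_0}=\sum_{d_i>t}b_{H_0,d_i}q^{d_i}$, and combining this with $|\Pa|=|ST|+|\Pa\setminus ST|$ gives $|ST|=1+\beta_0+L_{H_0}-|\Pa\setminus ST|$. So the inequality you must establish is precisely $L_{H_0}\ge|\Pa\setminus ST|$. But since $\beta_H+L_H=|\Pa|-1$ is constant over $H$, the minimiser $H_0$ of $\beta_H$ is the maximiser of $L_H$, and the inequality $\max_H L_H\ge|\Pa\setminus ST|$ is \emph{equivalent} to the lemma itself, not a step toward it. None of your suggested justifications close this: the bound $q^{d_i}-1\ge q^t$ says nothing when $b_{H_0,d_i}$ is small compared to $n_{d_i}$, and you cannot ``choose $H_0$ through one of the large subspaces'' because $H_0$ is already fixed as the hyperplane realising $\beta_0$.

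The averaging idea you mention and then abandon is exactly what works, and it is what the paper does. Assume for contradiction that $|ST|\le\beta_0$. Summing the inequality $\beta_0\le\beta_H$ over all $H\in\Ha$ and using Lemma~\ref{lem:HeLe1}(ii) gives
\[
\ta_n|ST|\;\le\;\ta_n\beta_0\;\le\;\sum_{H\in\Ha}\beta_H
\;=\;\sum_{i\le t}q^i\sum_{H\in\Ha} b_{H,i}
\;=\;\sum_{i\le t} n_i\, q^i\,\ta_{n-i}.
\]
Now apply the identity $q^i\ta_{n-i}=\ta_n-\ta_i$ to rewrite the right side as $\ta_n|ST|-\sum_{i\le t}n_i\ta_i$, which is strictly less than $\ta_n|ST|$: contradiction. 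The point is that averaging replaces the intractable ``$\max_H L_H$'' problem with a sum that the identity $q^i\ta_{n-i}=\ta_n-\ta_i$ collapses cleanly.
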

\begin{proof}
Suppose that $|ST|\leq \beta_0$. Then, applying the definition of $\beta_H$, implies that
\[\ta_n|ST| \leq\ta_n\beta_0 
=\sum_{H\in \Ha} \beta_0\leq \sum_{H\in \Ha} \beta_H
= \sum_{H\in \Ha} \sum_{i=1}^t b_{H,i}q^i,\]
and thus with the use of Lemma~\ref{lem:HeLe1}, we obtain
\begin{align*}
 \ta_n|ST| \leq\sum_{i=1}^tq^i \Big{(}\sum_{H\in \Ha} b_{H,i}\Big{)} 
 & \leq \sum_{i=1}^tq^i(n_i\ta_{n-i}) \\
 &\leq  \sum_{i=1}^tn_i(\ta_{n}-\ta_i)\leq  \ta_n|ST|- \sum_{i=1}^tn_i\ta_i,
\end{align*}
which is a contradiction since $\ds\sum_{i=1}^tn_i\ta_i$ is the number of points in $ST$,
and this number is positive.
\end{proof}
\begin{lemma}\label{lem:A2} 
Let $\Pa$ be a partition of $V(n,q)$ with a $d$-supertail $ST$ such that $t$ is the maximum 
dimension of any subspace in $ST$ and $d<2t$.
For any $H\in \Ha$, let $\beta_H=\sum_{i=d_1}^t b_{H,i} q^i$ and $\beta_0=\min_{H\in\Ha} \beta_H$.
 If $ST$ has size $q^t+1$, then $\beta_0=q^t$. Moreover, there exists an integer $c_0$ such that 
\[\sum_{i=1}^{t} n_i\ta_i=\frac{c_0q^d-1}{q-1}.\]
\end{lemma}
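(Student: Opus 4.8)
The plan is to deduce both assertions from the single divisibility $q^{d}\mid N-q^{t}$, where $N:=\sum_{i=1}^{t}n_i q^{i}$. First I note that this is enough. Indeed $(q-1)\sum_{i=1}^{t}n_i\ta_i=\sum_{i=1}^{t}n_i(q^{i}-1)=N-|ST|=N-q^{t}-1$, so if $q^{d}\mid N-q^{t}$ then, setting $c_{0}:=(N-q^{t})/q^{d}$, we get $\sum_{i=1}^{t}n_i\ta_i=(c_{0}q^{d}-1)/(q-1)$, which is the ``moreover'' part. Next, apply Lemma~\ref{lem:pts} with $d_{s}=d$: for every $H\in\Ha$ one has $N-\beta_{H}=c_{H}q^{d}$ with $c_{H}$ a nonnegative integer, so $\beta_{H}\equiv N\pmod{q^{d}}$ and, being nonnegative, $\beta_{H}\ge N\bmod q^{d}=:\rho$ for all $H$; thus $\beta_{0}\ge\rho$, while Lemma~\ref{lem:C} gives $\beta_{0}\le|ST|-1=q^{t}$. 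Hence $q^{d}\mid N-q^{t}$ (i.e.\ $\rho=q^{t}$) forces $q^{t}\le\rho\le\beta_{0}\le q^{t}$, that is $\beta_{0}=q^{t}$. (If $d$ does not occur in $\Pa$, replace it by the least occurring dimension $d'\ge d$; this leaves $ST$ and $t$ unchanged, and since $q^{d}\mid q^{d'}$ it suffices to prove the claim for $d'$. In what follows I assume $d$ occurs in $\Pa$.)

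So it remains to prove $\rho\ge q^{t}$, and here I would argue by contradiction, assuming $\rho\le q^{t}-1$. Pick $U\in ST$ with $\dim U=t$, which exists since $|ST|=q^{t}+1\ge 2$. Every hyperplane $H\supseteq U$ has $b_{H,t}\ge 1$, hence $\beta_{H}\ge q^{t}>\rho$; since $\beta_{H}\equiv\rho\pmod{q^{d}}$, this forces $\beta_{H}\ge\rho+q^{d}$. Summing over the $\ta_{n-t}$ hyperplanes through $U$ (Proposition~\ref{pro:num-hyperplanes}),
\[\sum_{H\supseteq U}\beta_{H}\ \ge\ \ta_{n-t}\,(\rho+q^{d})\ \ge\ \ta_{n-t}\,q^{d}.\]

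The heart of the matter is to evaluate the left-hand side exactly and compare. Writing $\sum_{H\supseteq U}\beta_{H}=\sum_{i\le t}q^{i}\sum_{H\supseteq U}b_{H,i}$ and, for each member $A\in ST$ of dimension $i$ with $A\ne U$, counting the hyperplanes through both $U$ and $A$ --- there are $\ta_{n-t-i}$ of them, because distinct members of $ST$ meet only in $\mathbf{0}$, so $\dim\langle U,A\rangle=t+i$ --- one finds $\sum_{H\supseteq U}b_{H,i}=n_i\,\ta_{n-t-i}$ for $i<t$ and $\sum_{H\supseteq U}b_{H,t}=\ta_{n-t}+(n_t-1)\ta_{n-2t}$. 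Substituting these, using the identity $q^{i}\ta_{n-t-i}=\ta_{n-t}-\ta_i$, the value $|ST|=q^{t}+1$, and writing $P:=\sum_{i=1}^{t}n_i\ta_i$, a short computation gives
\[\sum_{H\supseteq U}\beta_{H}=\ta_{n-t}\,(2q^{t}+1-n_t)+q^{t}(n_t-1)\ta_{n-2t}-P+n_t\ta_t.\]
Comparing with the lower bound and rearranging yields
\[q^{t}(n_t-1)\ta_{n-2t}+(n_t\ta_t-P)\ \ge\ \ta_{n-t}\,\bigl(q^{d}-2q^{t}-1+n_t\bigr),\]
where $q^{d}-2q^{t}\ge 0$ because $d\ge t+1$. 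Now $P\ge n_t\ta_t$, so the left bracket is $\le 0$. If $n_t=1$, the left side is $\ta_t-P\le 0$ while the right side is $\ta_{n-t}(q^{d}-2q^{t})\ge 0$, forcing $P=\ta_t$ and hence $ST=\{U\}$, contradicting $|ST|=q^{t}+1\ge 3$. If $n_t\ge 2$, drop the nonpositive bracket and substitute $\ta_{n-t}=q^{t}\ta_{n-2t}+\ta_t$; the inequality collapses to $-q^{t}\ta_{n-2t}(q^{d}-2q^{t})\ge\ta_t\,(q^{d}-2q^{t}-1+n_t)$, whose left side is $\le 0$ and whose right side is $>0$ (as $q^{d}-2q^{t}\ge 0$ and $n_t\ge 2$) --- a contradiction. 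This would give $\rho\ge q^{t}$, completing the proof.

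The main obstacle is that Lemmas~\ref{lem:C} and~\ref{lem:pts} by themselves only determine $\beta_{0}$ modulo $q^{d}$, so the minimality $|ST|=q^{t}+1$ must genuinely enter the argument; it does so through the exact evaluation of $\sum_{H\supseteq U}\beta_{H}$ combined with the trivial inequality $P\ge n_t\ta_t$, and locating that combination is the crux. The remaining care concerns the degenerate situations: $n_t=1$ (treated separately above), $n=2t$ (where $\ta_{n-2t}=\ta_{0}=0$, and one must check that the reduced inequalities still close up), and $q=2$ with $d=t+1$ (where $q^{d}-2q^{t}=0$, so that slack disappears and one must instead rely on the term $q^{d}-2q^{t}-1+n_t\ge 1$).
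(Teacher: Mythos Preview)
Your argument is correct, and it takes a genuinely different route from the paper's own proof. Both proofs reduce the lemma to the inequality $\beta_H\ge q^{t}$ for every hyperplane $H$ (equivalently, $\rho=q^{t}$ in your notation) and then combine this with Lemma~\ref{lem:C} and Lemma~\ref{lem:pts}; the ``moreover'' part is derived identically. The difference lies in how the lower bound $\beta_H\ge q^{t}$ is obtained. The paper does this in one line by invoking an external argument, namely Case~2 in the proof of Proposition~6 of~\cite{HLNS2}: if some $\beta_H<q^{t}$, that analysis shows $|ST|>q^{t}+1$. You instead give a self-contained proof: fix a $t$-dimensional member $U\in ST$, observe that every hyperplane through $U$ satisfies $\beta_H\ge\rho+q^{d}$, and compare the exact value of $\sum_{H\supseteq U}\beta_H$ (computed by counting, for each $A\in ST$, the hyperplanes through $\langle U,A\rangle$) against the resulting lower bound $\ta_{n-t}(\rho+q^{d})$. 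The algebraic reduction using $\ta_{n-t}=q^{t}\ta_{n-2t}+\ta_t$ and the split into the cases $n_t=1$ and $n_t\ge 2$ is clean and correct.

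Two small comments. First, the degenerate case $n=2t$ that you flag cannot actually occur: since $\Pa\setminus ST$ is nonempty and contains a subspace of dimension at least $d$, while $U\in ST$ has dimension $t$, the dimension condition forces $n\ge d+t>2t$, so $\ta_{n-2t}\ge 1$ throughout. Second, the existence of $U$ with $\dim U=t$ follows directly from $t$ being the \emph{maximum} dimension in $ST$, not from $|ST|\ge 2$. Neither point affects the validity of your proof. What your approach buys is a fully self-contained argument that does not depend on locating and re-reading a case analysis inside another paper; what the paper's approach buys is brevity, at the cost of that dependence.
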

\begin{proof}
Let $a$ be the minimum dimension of any subspace in $ST$ 
First, suppose that for some $H\in \Ha$, we have $\beta_H<q^t$.
Then by Lemma~\ref{lem:pts}, there exists an integer $c_H$ such that 
$\sum_{i=a} ^t(n_i-b_{H,i})q^i=c_Hq^{d} $. Hence, we have
\begin{eqnarray*} 
\sum_{i=a} ^t n_iq^{i-a}=c_Hq^{d-a}+\beta_Hq^{-a}.
\end{eqnarray*}
Since $\beta_H<q^t$, 
we obtain $\beta_Hq^{-a}<q^{t-a}$. Then, it follows from~\cite[Proof of Proposition~$6$ (Case~2)]{HLNS2}
that $|ST|>q^t+1$. This is a contradiction and thus $\beta_H\geq q^t$ for all $H\in \Ha$ . 
By Lemma~\ref{lem:C}, $\beta_0\leq |ST|-1=q^t$, and thus $\beta_0=q^t$.

 Now by Lemma~\ref{lem:pts}, there exists an integer $c_0$ such that 
$\sum_{i=a}^t n_iq^i-\beta_0=c_0q^{d}$. Since $\beta_0=q^t$ and $\sum_{i=a}^t n_i=|ST|=q^t+1$, it follows after some 
arithmetic that 
\begin{eqnarray}\label{eq:3}
\sum_{i=a}^t  n_i\ta_i=\frac{c_0q^{d}-1}{q-1}.
\end{eqnarray}

\end{proof}
Let $\Pa$ be a partition of $V(n,q)$ with a $d$-supertail $ST$.
If $ST$ has type $[t^{q^t+1}]$, then we recall that it follows 
from Heden~\cite[Theorem~$3$]{He-T} that the union of the subspaces in $ST$
is a $2t$-subspace. The following lemma is an extension of that result.
\begin{lemma}\label{lem:sub}
Let $\Pa$ be a partition of $V(n,q)$ with a $d$-supertail $ST$ of type $[t^1,a^{q^t}]$, 
i.e., $ST$ contains one subspace of dimension $t$ and $q^t$ subspaces of dimension $a$, where $t>a$.
Then the union of the subspaces in $ST$ forms a $t+a$-subspace.
\end{lemma}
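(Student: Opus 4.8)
The plan is to show that the point set $W=\bigcup_{A\in ST}A$ has exactly $\theta_{t+a}$ points and is contained in exactly $\theta_{n-t-a}$ hyperplanes, and then invoke Proposition~\ref{pro:is-d-dim}. Counting the points is immediate: the $q^t$ subspaces of dimension $a$ together with the one $t$-subspace cover $q^t\theta_a+\theta_t$ nonzero vectors, and a short computation gives $q^t\theta_a+\theta_t=\theta_{t+a}$; since the subspaces of $ST$ partition $W^*$, this is exactly $|W^*|$. So the whole weight of the argument is the hyperplane count.

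For the hyperplane count I would run the two-way counting machinery of Lemma~\ref{lem:HeLe1}, applied to the pair of dimensions $d=a$ and $d'=t$ occurring in $ST$ (note $1\le a<t\le n-2$, the latter because $d_s<2t$ forces $n>2t$... more carefully, $ST$ being a proper supertail and Lemma~\ref{lem:A2} applying means $t\le n-2$). Here $n_a=q^t$ and $n_t=1$. Let $H$ range over $\Ha$ and set, as in Lemma~\ref{lem:A2} with $d=d_s$, $\beta_H=b_{H,a}q^a+b_{H,t}q^t$. By Lemma~\ref{lem:A2} we have $\beta_H\ge q^t$ for every $H$, with equality for at least one hyperplane, and $\beta_0=q^t$. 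The key point is that a hyperplane $H$ contains $W$ if and only if it contains every subspace of $ST$, i.e. $b_{H,a}=q^t$ and $b_{H,t}=1$, i.e. $\beta_H=q^t\theta$ where $\theta=q^a$... wait, $b_{H,a}q^a+b_{H,t}q^t=q^t\cdot q^a+q^t$, which is $>q^t$. So containing $W$ gives a large $\beta_H$, not the minimal one. Instead I want to count hyperplanes $H$ with $b_{H,a}=q^t$ and $b_{H,t}=1$ directly. Using parts $(ii)$, $(iii)$, $(iv)$ of Lemma~\ref{lem:HeLe1} I would compute $\sum_H b_{H,a}s_b$, $\sum_H \binom{b_{H,a}}{2}s_b$, $\sum_H b_{H,a}b_{H,t}s_b$ and $\sum_H \binom{b_{H,t}}{2}s_b=0$ (since $n_t=1$), and then bound the number of hyperplanes through $W$ from below by combining these moments with the upper bounds $b_{H,a}\le q^t$, $b_{H,t}\le 1$; the extremal case of the moment inequalities pins down that $b_{H,a}\in\{q^t-?, q^t\}$ forces a clean count. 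Concretely, $\sum_H b_{H,a}s_b=q^t\theta_{n-a}$ and $\sum_H b_{H,a}b_{H,t}s_b=q^t\cdot 1\cdot\theta_{n-a-t}$; since the unique $t$-subspace lies in $\theta_{n-t}$ hyperplanes, and on each such hyperplane $b_{H,t}=1$, I get $\sum_{H\supseteq U}b_{H,a}=q^t\theta_{n-a-t}$ where $U$ is the $t$-subspace of $ST$. Each such $H$ has $b_{H,a}\le q^t$, so the number of $H\supseteq U$ with $b_{H,a}=q^t$ is at least $\theta_{n-a-t}-(\theta_{n-t}-\theta_{n-a-t})(q^t-1)/1$... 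I would need to sharpen this, but the arithmetic identity $q^t\theta_{n-a-t}=q^t\theta_{n-a-t}+0$ combined with $b_{H,a}\le q^t$ on $\theta_{n-t}$ hyperplanes and the total $\sum b_{H,a}$ being exactly $q^t\theta_{n-a-t}$ on those hyperplanes forces $b_{H,a}=q^t$ on exactly $\theta_{n-a-t}$ of them and $b_{H,a}=0$ on the rest — because $\theta_{n-t}\cdot q^t$ would overshoot unless most are zero, and the moment $\sum\binom{b_{H,a}}{2}$ from Lemma~\ref{lem:HeLe1}$(iii)$ is $\binom{q^t}{2}\theta_{n-2a}$, which, being the convexity-extremal value, forces each $b_{H,a}\in\{0,q^t\}$. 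This is the heart of the matter.

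Once I know that exactly $\theta_{n-a-t}$ hyperplanes contain $U$ and all $q^t$ of the $a$-subspaces (equivalently $b_{H,a}=q^t$), those are precisely the hyperplanes containing $W$; so $W$ lies in exactly $\theta_{n-(a+t)}$ hyperplanes, and Proposition~\ref{pro:is-d-dim} finishes the proof that $W$ is a $(t+a)$-subspace. I would write this up by: (1) recording $|W^*|=\theta_{t+a}$; (2) invoking Lemma~\ref{lem:A2} to get $\beta_0=q^t$ and the relation $\sum n_i\theta_i=(c_0q^d-1)/(q-1)$, which cross-checks the point count and implies $c_0$ is whatever it must be; (3) the moment computation via Lemma~\ref{lem:HeLe1} with $(d,d')=(a,t)$; (4) the convexity/extremality argument forcing $b_{H,a}\in\{0,q^t\}$ and $b_{H,a}=q^t \iff H\supseteq W$; (5) the hyperplane count and Proposition~\ref{pro:is-d-dim}.

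The main obstacle I anticipate is step (4): turning the three moment equations from Lemma~\ref{lem:HeLe1} into the rigid conclusion that $b_{H,a}$ takes only the values $0$ and $q^t$. This is a discrete second-moment (Cauchy–Schwarz–type) argument, and it works cleanly only because $n_a=q^t$ is the maximal conceivable value of $b_{H,a}$ and because the first and second moments $n_a\theta_{n-a}$ and $\binom{n_a}{2}\theta_{n-2a}$ sit exactly at the boundary of what a $0$/$q^t$-valued variable with those marginals can achieve — I will have to verify that the relevant inequality $\sum_H\binom{b_{H,a}}{2}s_b\le \binom{q^t}{2}\cdot(\text{number of }H\text{ with }b_{H,a}=q^t)$ combined with $(ii)$ and $(iii)$ leaves no slack, using $d<2t$ (hence the need for the hypothesis $d_s<2d_{s-1}$, which here reads $d<2t$, guaranteeing $\theta_{n-2a}$ behaves correctly relative to $\theta_{n-a}$). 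If a gap appears, the fallback is to also use the $b_{H,t}$ moment and Lemma~\ref{lem:HeLe0} ($|\Pa|=1+\sum b_{H,i}q^i$), which rigidly links $b_{H,a}$ and $b_{H,t}$ on each hyperplane and may be exactly what closes the case.
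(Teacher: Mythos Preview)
Your overall plan---count points to get $\theta_{t+a}$, count hyperplanes through $W$ to get $\theta_{n-t-a}$, then invoke Proposition~\ref{pro:is-d-dim}---matches the paper's strategy. The gap is in your step~(4): the claim that $b_{H,a}\in\{0,q^t\}$ for every hyperplane $H$ is \emph{false}, and the convexity argument you sketch actually shows the opposite of what you want. If all $b_{H,a}$ took only the values $0$ and $q^t$, then from $\sum_H b_{H,a}=q^t\theta_{n-a}$ there would be $\theta_{n-a}$ hyperplanes with $b_{H,a}=q^t$, and hence $\sum_H\binom{b_{H,a}}{2}=\binom{q^t}{2}\theta_{n-a}$. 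But Lemma~\ref{lem:HeLe1}(iii) gives $\sum_H\binom{b_{H,a}}{2}=\binom{q^t}{2}\theta_{n-2a}$, which is strictly \emph{smaller}. So the distribution of $b_{H,a}$ is strictly less spread out than a two-valued $\{0,q^t\}$ distribution, and in fact the true value set is $\{0,\,q^{t-a},\,q^t\}$ (as you can check a posteriori once $W$ is known to be a $(t{+}a)$-subspace, by intersecting the partition of $W$ with a hyperplane of $W$).

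What the paper does instead is first invoke Lemma~\ref{lem:pts} to get the relation $b_{H,a}=q^t+(1-b_{H,t})q^{t-a}-c_Hq^{d-a}$, which yields the divisibility $q^{t-a}\mid b_{H,a}$ and the implication $b_{H,a}=0\Rightarrow b_{H,t}=1$. Then it evaluates the quadratic $\sum_i\alpha_i(i-q^{t-a})(i-q^t)$ (roots at $q^{t-a}$ and $q^t$, not at $0$ and $q^t$) via the three moments, obtaining the lower bound $\alpha_0\ge\theta_{n-t}-\theta_{n-t-a}$. The matching upper bound comes from your observation that every $H$ with $b_{H,a}=0$ contains $W_t$ but misses any fixed $a$-subspace $W_a$, and there are exactly $\theta_{n-t}-\theta_{n-t-a}$ such hyperplanes. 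Equality, applied for \emph{every} choice of $W_a$, forces the $\theta_{n-t-a}$ hyperplanes through $W_t+W_a$ to contain all of $ST$, and their intersection is the desired $(t{+}a)$-subspace. Your idea of restricting the moment sums to hyperplanes through $W_t$ (via Lemma~\ref{lem:HeLe1}(iv)) is a reasonable alternative packaging, but it still needs the divisibility input from Lemma~\ref{lem:pts} to rule out intermediate values of $b_{H,a}$; Lemma~\ref{lem:HeLe0} alone will not do this, since it involves all dimensions of $\Pa$, not just $a$ and $t$.
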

\begin{proof}  
Recall that $\Ha$ denotes the set of all hyperplanes of $V$. Let $H\in \Ha$ be any hyperplane.
It follows from Lemma~\ref{lem:pts} that there exists an integer $c_H\geq0$ such that
\[ (n_t-b_{H,t})q^{t}+(n_a-b_{H,a})q^{a}=c_Hq^d.\]
Thus, 
\begin{align}\label{eq:pts}
b_{H,a}=q^t+(1-b_{H,t})q^{t-a}-c_Hq^{d-a},
\end{align} 
where $0\leq b_{H,a}\leq q^t$ and $b_{H,t}\in\{0,1\}$. 
Let $\Aa$ be the set of $a$-subspaces in $ST$, and let $\alpha_i$ denote the number 
of hyperplanes in $V$  that contain exactly $i$ members of $\Aa$. 
If $\alpha_i\neq0$, then there exists a hyperplane $H\in\Ha$ that contains exactly $b_{H,a}=i$ members from $\Aa$.
Thus, it follows from~\eqref{eq:pts} that
\begin{equation}\label{eq:i}
\alpha_i\not=0\;\Rightarrow\; q^{t-a} \mbox{ divides $i$}.
\end{equation}

Define the integers $x$, $y$, and $z$ as follows:
\[x=\sum_{i=q^{t-a}}^{q^t}i\alpha_i,\quad y=\sum_{i=q^{t-a}}^{q^t}{i\choose2}\alpha_i,\quad 
z=\sum_{i=q^{t-a}}^{q^t}\alpha_i.\]
 Then it follows from Lemma~\ref{lem:HeLe1} that
\begin{equation}\label{eq:a}
x=\sum_{i=q^{t-a}}^{q^t} i\alpha_i=n_a \ta_{n-a}
\end{equation} 
 and 
\begin{equation}\label{eq:b}
y=\sum_{i=q^{t-a}}^{q^t}{i\choose2}\alpha_i={n_a\choose2}\ta_{n-2a}.
\end{equation}
Since $\sum_{i=0}^{q^t}\alpha_i=|\Ha|=\ta_n$, it follows by~\eqref{eq:i} and the definition of $z$ that 
\begin{equation}\label{eq:g}
z=\sum_{i=q^{t-a}}^{q^t}\alpha_i=\ta_n-\alpha_0.
\end{equation}
Using~\eqref{eq:a}--\eqref{eq:g} and the fact that $n_a=q^t$ and
$\ta_i=(q^i-1)/(q-1)$, we obtain
\begin{align}\label{eq:ab-1}
&\sum_{i=q^{t-a}}^{q^t}\alpha_i(i-q^{t-a})(i-q^t)\cr
&=2y+(1-q^{t-a}-q^t)x+q^{2t-a}z\cr
&=q^t(q^t-1)\ta_{n-2a}+(1-q^{t-a}-q^t)q^t\ta_{n-a}+q^{2t-a}(\ta_n-\alpha_0)\cr
&=\ta_{n+t-a} - \ta_{n+t-2a}-q^{2t-a}\alpha_0.
\end{align}
Note that
\begin{equation}\label{eq:ab-2}
(i-q^{t-a})(i-q^t)\begin{cases}
=0\;&\mbox{ if } i=q^{t-a},\\
<0\;&\mbox{ if } q^{t-a}<i<q^t,\\
=0\;&\mbox{ if } i=q^t.
\end{cases}
\end{equation}
Thus, it follows from~\eqref{eq:ab-1} and~\eqref{eq:ab-2} that
\begin{equation}\label{eq:ab-3}
\sum_{i=q^{t-a}}^{q^t}\alpha_i(i-q^{t-a})(i-q^t)=\ta_{n+t-a} - \ta_{n+t-2a}-q^{2t-a}\alpha_0\leq 0,
\end{equation}
and it follows from~\eqref{eq:ab-3} that 
\begin{equation}\label{eq:ab-4}
\alpha_0\geq\ta_{n-t}-\ta_{n-t-a}.
\end{equation}
Furthermore, since $b_{H,t} \in \{0,1\}$, it follows from~\eqref{eq:pts} 
that for any hyperplane $H$, we have
\begin{equation}\label{eq:ii}
b_{H,a}=0 \;\Rightarrow\; b_{H,t}=1.
\end{equation}
Hence, if $W_t$ is a $t$-subspace and $W_a$ is an $a$-subspace in the supertail $ST$, then each of the $\alpha_0$ hyperplanes that contain no $a$-subspace is a hyperplane that contains $W_t$ but not $W_a$.
As there are $\theta_{n-t}-\theta_{n-t-a}$ such hyperplanes, it follows that
\[
\alpha_0=\theta_{n-t}-\theta_{n-t-a}.
\]
Since we considered any $a$-subspace of $ST$, the argument shows that the $\theta_{n-t-a}$ hyperplanes that contain some $W_t$ must indeed contain all the $a$-spaces of $ST$. Thus all the subspaces of the supertail must be contained in the intersection $T$ of these $\theta_{n-t-a}$ hyperplanes. Moreover, since $\theta_{n-t-a}$ hyperplanes intersect in a subspace of dimension at most $t+a$, it follows that $T$ has dimension $t+a$ and is thus partitioned by the supertail.
\end{proof}

\bs We are now ready to prove our main theorem.
\begin{proof}[Proof of Theorem~\ref{thm:main}]\
Let $\Pa$ be a partition of $V(n,q)$ with a $d_s$-supertail $ST$ of size 
$|ST|=\sigma_q(d_s,d_{s-1})=q^{d_{s-1}}+1$. To simplify the notation, we set $d=d_s$ and $t=d_{s-1}$.
Let $k$ and $r_d$ be integers such that $k\geq1$, 
$n=kd+r_d$,  and $1\leq r_d< d$.
Recall that if $d\geq 2t$, then Corollary~\ref{cor:HLNS2} holds.
So we may assume that $r_t=d-t$ satisfies $0<r_t<t$. 

Since $\Pa$ contains subspaces of dimensions $d$ and $t$, it follows that $n\geq d+t$. 
 We now show that $n\geq 2d$. 
By way of contradiction, assume that $n< 2d$.
Then the dimension condition (see~\eqref{eq:dim} in Section~\ref{sec:1})
implies that $\Pa$ contains at most one $d$-subspace. Thus, $\Pa$ contains exactly one 
$d$-subspace, $Y$, and  its $d$-supertail is $ST=\Pa\setminus \{Y\}$. 
Since $n\geq d+t$, $|ST|=q^t+1$ and the maximum dimension of any subspace in $ST$ is (by definition) $t$, 
it follows that
\begin{align}\label{eq:main-thm-1}
q^t+1=|ST|\geq \frac{\left|\sum\limits_{X\in ST}|X^*|\right|}{|V(t,q)^*|}
=\frac{|V(n,q)^*|-|Y^*|}{|V(t,q)^*|}=\frac{(q^n-1)-(q^{d}-1)}{q^t-1}\geq q^{d},
\end{align}
which is a contradiction since $d>t$ and $q\geq2$. Thus, a $d$-supertail of a partition $\Pa$ of $V(n,q)$ cannot
be of minimum size $\sigma_q(d,t)=q^t+1$ if $n<2d$. 
So we may assume that $n\geq 2d$, i.e.,  $k\geq 2$
 (which we will use in the proof of part $(iii)$ below).
We now prove the theorem for each of the three conditions $(i)$, $(ii)$, and $(iii)$ stated in the theorem.

\bs\n $(i)$
Suppose the supertail $ST$ contains subspaces of  at most two different dimensions $d_1=a$ and $d_{s-1}=t$ such that $t>a$. Since $n_i$ denotes the number of $i$-subspaces, we have
\begin{equation}\label{eq:B.1}
n_t+n_a=|ST|=q^t+1, 
\end{equation}
 where $n_t>0$ and $n_a\geq 0$.
Moreover, since $d=t+r_t$, Lemma~\ref{lem:A2} yields 
\begin{equation}\label{eq:B.2}
n_t\ta_t +n_a\ta_a =\frac{c_0q^d-1}{q-1},
\end{equation}
where $c_0$ is a positive integer.
Since $\ta_i=(q^i-1)/(q-1)$, it follows from~\eqref{eq:B.1} and~\eqref{eq:B.2} that 
\[n_t(q^{t-a}-1)=
q^t(c_0q^{r_t-a}-1)+q^{t-a}-1\Rightarrow n_t=\frac{q^t(c_0q^{r_t-a}-1)}{q^{t-a}-1}+1.\]

Since $\gcd(q^t,q^{t-a}-1)=1$, the above equation implies that $q^{t-a}-1$ divides $c_0q^{r_t-a}-1$.
Hence $n_t=q^t\cdot x+1$, where 
$x=\ds\frac{c_0q^{r_t-a}-1}{q^{t-a}-1}$
 is either $0$ or $1$ since 
$n_t\leq q^t+1$.
If $x=0$, then $n_t=1$ and $n_a=q^t$.
In this case, it follows from Lemma~\ref{lem:sub} that the union of the subspaces in $ST$ is a subspace 
of dimension $t+a=d_{s-1}+d_1$.
If $x=1$, then $n_t=q^t+1$ and $n_a=0$. 
In this case, $ST$ contains only subspaces of dimension $t=d_{s-1}$ and Theorem~\ref{thm:He-T} implies 
that $ST$ is a $d_{s-1}$-spread.

\bs\n $(ii)$ If $t-r_t=1$, then it follows from Lemma~\ref{lem:A} that the smallest 
dimension in $ST$ is $d_1\geq r_t=t-1$. Thus $ST$ contains  subspaces of at most two different 
dimensions, namely $t$ and $t-1$.
Now the main theorem follows from Theorem~\ref{thm:He-T} and part $(i)$ above.

\bs\n $(iii)$ Recall that $d=d_s$, $t=d_{s-1}$, $r_t=d-t$, and $n\geq 2d$. 
Moreover, $k$ and $r_d$ are integers such that $k\geq2$, $n=kd+r_d$, and $1\leq r_d< d$. 
Let $\ell=q^{r_d} \sum_{i=0}^{k-2}q^{id}$, and let $\Pa$ be a partition of $V(n,q)$ with a ${d}$-supertail 
$ST$ of minimum size $q^t+1$. 
Then $\Pa\setminus ST$ must be a partial $d$-spread. Note that
\begin{equation}\label{psp:eq1}
\ell =q^{r_d} \sum\limits_{i=0}^{k-2}q^{id}\Rightarrow \ell q^{d}=\frac{q^{d}(q^{n-{d}}-q^{r_d})}{q^{d}-1}.
\end{equation}
Let $\mu_q(n,d)$ denote the maximum number of $d$-subspaces in any partial 
$d$-spread of $V(n,q)$.
Then the upper bound given by Drake-Freeman~\cite[Corollary~$8$]{DrFr} implies  that 
\begin{equation}\label{psp:eq2}
\mu_q(n,d)<\ell q^{{d}}+\frac{q^{r_d}+q^{r_d-1}}{2}+1,
\end{equation}
Since $n=kd+r_d$ 
and the maximum dimension in $\Pa$ is ${d}$, it follows from Theorem~\ref{thm:HLNS1} that 
\begin{equation}\label{psp:eq2.2}
|\Pa|\geq \ell q^{d}+q^{\lceil \frac{{d}+r_d}{2}\rceil}+1.
\end{equation}
By definition of $\Pa$ and $ST$, it follows that
\begin{equation}\label{psp:eq2.5}
|\Pa|=|\Pa\setminus ST|+|ST|=n_{d}+q^t+1.
\end{equation}
Hence,~\eqref{psp:eq2},~\eqref{psp:eq2.2}, and~\eqref{psp:eq2.5} yield
\begin{equation}\label{psp:eq3}
q^{\lceil \frac{{d}+r_d}{2}\rceil}-q^t\leq n_{d}-\ell q^{d}<\frac{q^{r_d}+q^{r_d-1}}{2}+1.
\end{equation}
Since $0\leq r_d<{d}$, $d=t+r_t$, and $1\leq r_t<t$, it follows that 
\begin{equation}\label{psp:eq4}
-q^{d}<q^{\lceil \frac{{d}+r_d}{2}\rceil}-q^t \mbox{ and } \frac{q^{r_d}+q^{r_d-1}}{2}<q^{r_d}<q^{d}.
\end{equation}
Thus,~\eqref{psp:eq3} and~\eqref{psp:eq4} yield
\begin{equation}\label{psp:eq5}
-q^{d}< n_{d}-\ell q^{d}<q^{d}.
\end{equation}
Next, it follows from Lemma~\ref{lem:A2} that there exists some integer $c_0$ such that the number of vectors in $\bigcup_{X\in ST}X$ is equal to 
\begin{equation}\label{psp:eq6}
\sum_{i=a}^t n_i(q^i-1)=c_0q^{d}-1,
\end{equation}
where $a$ is the smallest dimension of a subspace in $ST$.
Since $\Pa\setminus ST$ is a partial ${d}$-spread of $V(n,q)$, it follows from~\eqref{psp:eq6} and from
counting in two ways the number of nonzero vectors in $V(n,q)$ that 
\begin{equation}\label{psp:eq7}
n_{d}(q^{d}-1)+c_0q^{d}-1=q^{n}-1 \Rightarrow n_{d}(q^{d}-1)=q^{d}(q^{n-{d}}-c_0).
\end{equation}
Hence,~\eqref{psp:eq7} implies that $q^{d}$ divides $n_{d}$. Thus $q^{d}$ divides $n_{d}-\ell q^{d}$. 
Now the second inequality  in~\eqref{psp:eq5} implies that $n_{d}-\ell q^{d}=0$, i.e.
\begin{equation}\label{psp:eq7.5}
|\Pa\setminus ST|=n_{d}=\ell q^{d}.
\end{equation}
Since $n_{d}=\ell q^{d}$ and ${d}=t+r_t$, it follows from the first inequality in~\eqref{psp:eq3} that 
\[ q^{\lceil \frac{{d}+r_d}{2}\rceil}-q^t\leq n_{d}-\ell q^{d}=0\Rightarrow r_d\leq t-r_t
\]
Since $d=t+r_t$, $|ST|=q^t+1$, $n_{d}=\ell q^{d},$ and $r_d\leq t-r_t$, it follows from Lemma~\ref{lem:HLNS3-2}
(see the Appendix) that $W=\bigcup_{X\in ST}X$ is 
a subspace of dimension ${d}+r_d=t+r_t+r_d\leq 2t$. 
If $r_d=t-r_t$, then $\dim W=2t$. Then $ST$ is a subspace partition of $W$ 
into $t$-subspaces only, since otherwise,
counting the nonzero vectors in $W$ in two different ways, i.e., 
\[q^{2t}-1=|W^*|=\sum_{X\in ST}|X^*|<|ST|\cdot |V(t,q)^*|=q^{2t}-1,\]
yields a contradiction. 
 We remark that if $r_d= t-r_t$, then $ \frac{{d}+r_d}{2}=t$; thus the subspace partition $\Pa$ must 
 be of type $[d^{\ell q^d}, t^{q^{t+1}}]$ and of minimum size (i.e., $|\Pa|=\sigma_q(n,d)$).
Such partitions  are known to exist and are discussed in~\cite{HLNS3}
(in particular, see Theorem~$2$).

Finally, if $r_d<t-r_t$, then $\dim W=t+r_t+r_d<2t$ and it follows from the dimension condition (see~\eqref{eq:dim} in Section~\ref{sec:1}) that $ST$ is a subspace partition of $W$ with exactly one $t$-subspace and with other subspaces of dimension at most $r_t+r_d$. Since $\dim W=t+r_t+r_d<2t$ and $|ST|=q^t+1$, counting in two ways 
the nonzero vectors in $W$ yields 
\begin{align}\label{eq:iii-1}
q^{t+r_t+r_d}-1=|W^*|
&=\sum_{X\in ST}|X^*|\cr
&\leq |V(t,q)^*|+(|ST|-1)\cdot|V(r_t+r_d,q)^*|=q^{t+r_t+r_d}-1.
\end{align}
If some subspace in $ST$ has dimension less than $r_t+r_d$, then inequality in~\eqref{eq:iii-1} 
becomes strict and we obtain a contradiction. Thus, $ST$ contains one $t$-subspace and $q^t$ subspaces of dimension $r_t+r_d$. 
In this case, we also remark that if $r_d= t-r_t-1$, then the subspace partition $\Pa$ is 
of type $[d^{\ell q^d}, t^1, (t-1)^{q^t}]$ and of minimum size
(e.g., see Theorem~$4$ in~\cite{HLNS3} for the existence of such partitions). 
However, if $r_d< t-r_t-1$, then the resulting subspace partitions are not 
necessarily of minimum size. For instance, if $n=34$, 
$k=3$, $d=11$, $r_d=1$, $t=7$, and $r_t=4$, we can 
apply (several times) Lemma~\ref{lem:Be} to construct a subspace partition of $V(34,q)$ of type
$[11^{q^{23}+q^{12}},7^1,5^{q^7}]$ and size $q^{23}+q^{12}+q^7+1$, which is 
larger than $\sigma_q(34,11)=q^{23}+q^{12}+q^6+1$.  
\end{proof}
We are now ready to prove Corollary~\ref{cor:main-cor}.
\begin{proof}
Let $\Pa$ be a subspace partition of $V=V(n,q)$ of type $[d_1^{n_1},\ldots,d_m^{n_m}]$.
Let $ST$ be the $d_s$-supertail of $\Pa$, let $\widehat{ST}$ be its $d_{s+1}$-supertail, and
assume that $ST$ has size $\sigma_q(d_s,d_{s-1})$. We shall prove the following statements:
\begin{enumerate}
\item[(i)] If $2\leq s\leq3$, $d_s<2d_{s-1}$, and $d_{s+1}< 2d_s$, then
\[|\widehat{ST}|\geq \sigma_q(d_{s+1},d_{s})+\sigma_q(d_s,d_{s-1}).\]
\item[(ii)]If $d_s\geq 2d_{s-1}$, or if $s=3$ and $d_3=d_2+d_1$, then
\[|\widehat{ST}|\geq \sigma_q(d_{s+1},d_{s})+\sigma_q(d_s,d_{s-1})-1.\]
\end{enumerate}

By using the hypotheses of 
 Corollary~\ref{cor:HLNS2} and Theorem~\ref{thm:main}, we can infer that $W=\bigcup_{X\in ST}X$ is a subspace of $V$ 
and $\dim W\geq d_s$. Thus, it follows from the definitions of $\Pa$ and $ST$ that 
\begin{equation}\label{def:Pp}
\Pa'=\left\{X\in\Pa:\, X\not\in ST \right\}\cup\{W\}
\end{equation}
is a subspace partition of $V$. Since $\Pa$ contains a $d_s$-subspace and $\dim X<d_s$ for any $X\in ST$, 
it follows from the definition in~\eqref{def:Pp} that $\Pa'$ also contains a $d_s$-subspace $Y$. 
Let $\widehat{ST'}$ be the $d_{s+1}$-supertail of $\Pa'$ and let $h$ be the the largest dimension of a subspace
 in $\widehat{ST'}$. Since $Y\in\widehat{ST'}$, it follows that 
$d_s=\dim Y\leq h<d_{s+1}$.  

 If $W\not\in\widehat{ST'}$, then $h=\dim Y=d_s$, and $\widehat{ST}$ is the disjoint union of $\widehat{ST'}$ 
and $ST$. Thus, 
\begin{align}\label{eq:STh1}
|\widehat{ST}|=|\widehat{ST'}|+|ST|\geq \sigma_q(d_{s+1},d_s)+\sigma_q(d_s,d_{s-1}).
\end{align}
Observe that $W\not\in\widehat{ST'}$ if and only if $\dim W\geq d_{s+1}$. 
In this case, $d_{s+1}\leq \dim W<2d_s$. 

If $W\in\widehat{ST'}$, then $h=\dim W<d_{s+1}$, and $\widehat{ST}$ is the disjoint union of 
$\widehat{ST'}\setminus\{W\}$ and $ST$. Thus, 
\begin{align}\label{eq:STh2}
|\widehat{ST}|\geq (|\widehat{ST'}|-1)+|ST|
&\geq \sigma_q(d_{s+1},\dim W)+\sigma_q(d_s,d_{s-1})-1.
\end{align}
If $d_s<2d_{s-1}$ and $d_{s+1}< 2d_s$, then $\dim W< d_{s+1}< 2\dim W$, and
it follows from Theorem~\ref{thm:HLNS1} that 
\begin{align}\label{eq:STh3}
\sigma_q(d_{s+1},\dim W)=q^{\dim W}+1>q^{d_s}+1=\sigma_q(d_{s+1},d_s).
\end{align}
Thus, from~\eqref{eq:STh2} and the strict inequality of ~\eqref{eq:STh3}, we obtain that  
\begin{align}\label{eq:STh4}
|\widehat{ST}|\geq \sigma_q(d_{s+1},d_s)+\sigma_q(d_s,d_{s-1}).
\end{align}
Now part $(i)$ of the corollary follows from~\eqref{eq:STh1}, \eqref{eq:STh3}, 
and~\eqref{eq:STh4}.

Finally, $\dim W=d_{s}$ if $d_s\geq 2d_{s-1}$ (by Corollary~\ref{cor:HLNS2}), or if $d_s=d_1+d_{s-1}$
(by Theorem~\ref{thm:main}). Thus part $(ii)$ of the corollary follows from~\eqref{eq:STh2}. 
\end{proof}

\section{Appendix}\label{sec:4}

\bs The following two lemmas (Lemma~\ref{lem:HLNS3-1} and Lemma~\ref{lem:HLNS3-2}) are direct  
adaptations of~\cite[Lemma~$7$ and Proposition~$1$]{HLNS3}. 
For the sake of completeness, we repeat their proofs here. 
In the following, let $\mathcal D$ denote the family of $d$-subspaces in a partition $\Pa$ of 
$V=V(n,q)$ with minimum size $d$-supertail $ST$. Let $\alpha_i$ denote the number of hyperplanes in $V$ that 
contain exactly $i$ members of 
$\mathcal D$.
\begin{lemma}\label{lem:HLNS3-1}
Let $n$, $k$, ${d}$, and $r_d$ be integers such that $k\geq2$, $n=kd+r_d$, ${d}=t+r_t$, $1\leq r_d<d$, and $1\leq r_t<t$.
Let $\ell=q^{r_d} \sum_{i=0}^{k-2}q^{id}$, and let
$\Pa$ be a partition of $V=V(n,q)$ whose largest subspace dimension is $d$ and such that $n_{d}=\ell q^{d}$.
Assume, furthermore that $\Pa$ has a $d$-supertail $ST$ of minimum size $|ST|=q^t+1$ and 
with largest subspace dimension $t$.
Then, the following conclusions hold.
\begin{enumerate}
\item[(a)]  If $\alpha_i\neq0$, then $\delta=\ell-q^{r_d} \leq i\leq \ell$. 
\item[(b)] The extremal case $\alpha_\delta\neq0$ occurs if there exists a hyperplane $H$ of $V$ such that 
all members of $ST$ are subspaces of $H$.
\item[(c)] The extremal case $\alpha_\ell \neq 0$ occurs if there exists a hyperplane $H$  of $V$ such that the 
number of non-zero vectors in $\bigcup_{X\in ST}(X\cap H)$ equals $q^{d+r_d-1}-1$.
\end{enumerate}  
\end{lemma}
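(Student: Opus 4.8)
The plan is to deduce all three parts from one application of Lemma~\ref{lem:pts} to a single hyperplane, fed by one global identity coming from Lemma~\ref{lem:A2} and the packing condition. First I would record the setup. Since $d$ is the largest subspace dimension occurring in $\Pa$, every subspace of $\Pa$ outside $ST$ has dimension exactly $d$; hence $\mathcal D=\Pa\setminus ST$ is a partial $d$-spread, $d$ is the only dimension $\ge d$ occurring in $\Pa$, and $d=t+r_t<2t$. Writing $a$ for the smallest dimension of a subspace in $ST$, Lemma~\ref{lem:A2} applies (its hypotheses are exactly $d<2t$, maximal dimension $t$ in $ST$, and $|ST|=q^t+1$) and yields an integer $c_0$ with $\sum_{i=a}^{t}n_i(q^i-1)=c_0q^{d}-1$. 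Counting the nonzero vectors of $V$ over $\mathcal D\cup ST$ gives $n_d(q^d-1)+(c_0q^d-1)=q^n-1$; substituting $n_d=\ell q^d$ and simplifying with $n=kd+r_d$ and the closed form of $\ell$ pins down $c_0=q^{r_d}$, so that
\[
\sum_{i=a}^{t}n_iq^{i}=(c_0q^d-1)+\sum_{i=a}^{t}n_i=q^{d+r_d}+q^{t}.
\]
The same data also give $q^{n-d}-\ell q^{d}=-\delta$, where $\delta=\ell-q^{r_d}$.

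Next I would fix a hyperplane $H$. Because $d$ is the unique dimension $\ge d$ in $\Pa$, Lemma~\ref{lem:pts} reads $\sum_{i=a}^{t}(n_i-b_{H,i})q^{i}=c_Hq^{d}$ with $c_H=q^{n-d}-(n_d-b_{H,d})=b_{H,d}-\delta$, and the lemma asserts $c_H\ge 0$; this at once gives $b_{H,d}\ge\delta$. For the reverse inequality, $c_Hq^{d}=\sum_{i=a}^{t}(n_i-b_{H,i})q^{i}\le\sum_{i=a}^{t}n_iq^{i}=q^{d+r_d}+q^{t}$, so $c_H\le q^{r_d}+q^{t-d}<q^{r_d}+1$ (as $t<d$), hence $c_H\le q^{r_d}$ and $b_{H,d}\le q^{r_d}+\delta=\ell$; this proves (a). For (b), note $c_H=0\iff b_{H,d}=\delta$, and that $c_Hq^d=\sum_{i=a}^t(n_i-b_{H,i})q^i$ is a sum of nonnegative terms; hence $b_{H,d}=\delta$ holds iff $b_{H,i}=n_i$ for all $i<d$, i.e.\ iff $H$ contains every member of $ST$. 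In particular $\alpha_\delta\ne0$ whenever such an $H$ exists.

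For (c) I would take a hyperplane $H$ with $b_{H,d}=\ell$, so $c_H=q^{r_d}$ and $\sum_{i=a}^{t}(n_i-b_{H,i})q^{i}=q^{d+r_d}$. The members of $ST$ meet pairwise only in $\mathbf 0$, hence so do their traces $X\cap H$; a member of $ST$ of dimension $i$ contributes $q^{i}-1$ nonzero vectors to $\bigcup_{X\in ST}(X\cap H)$ if it lies in $H$ and $q^{i-1}-1$ otherwise (being then a hyperplane of $X$), and exactly $b_{H,i}$ of the $n_i$ such members lie in $H$. Thus the number of nonzero vectors in $\bigcup_{X\in ST}(X\cap H)$ equals
\[
\sum_{i=a}^{t}\bigl(b_{H,i}q^{i}+(n_i-b_{H,i})q^{i-1}\bigr)-\sum_{i=a}^{t}n_i,
\]
and, using $\sum_{i=a}^t n_i=q^t+1$, $\sum_i b_{H,i}q^{i}=\sum_i n_iq^{i}-\sum_i(n_i-b_{H,i})q^{i}=(q^{d+r_d}+q^t)-q^{d+r_d}=q^t$, and $\sum_i(n_i-b_{H,i})q^{i-1}=q^{-1}\cdot q^{d+r_d}=q^{d+r_d-1}$, this equals $q^{d+r_d-1}-1$. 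Since the chain of equalities is reversible --- the value $q^{d+r_d-1}-1$ of the count forces $c_H=q^{r_d}$, hence $b_{H,d}=\ell$ --- the existence of such an $H$ forces $\alpha_\ell\ne0$.

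I do not expect a genuine obstacle. Once the two expressions for $c_H$ in Lemma~\ref{lem:pts} are reconciled into $c_H=b_{H,d}-\delta$ and the constant $c_0=q^{r_d}$ is evaluated, all of (a)--(c) reduce to elementary arithmetic; the one spot demanding care is the bookkeeping with $\ell=q^{r_d}\sum_{i=0}^{k-2}q^{id}$ and $n=kd+r_d$, in particular tracking the degenerate case $k=2$ (where $\delta=0$).
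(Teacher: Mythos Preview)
Your argument is correct. The route differs from the paper's in how the work is organized: the paper proves the lower bound in (a) by the elementary ``points outside a hyperplane'' count $(n_d-i)q^{d-1}\le |B_H(V)|=q^{n-1}$, and the upper bound by Lemma~\ref{lem:HeLe0} via $|\Pa|\ge iq^d+1$; for (c) it splits the nonzero vectors of $H$ over $\Da'$, $\Da\setminus\Da'$, and $ST$. You instead first pin down $c_0=q^{r_d}$ (hence $\sum_{i\le t}n_iq^i=q^{d+r_d}+q^t$) from Lemma~\ref{lem:A2} together with the packing condition, and then feed Lemma~\ref{lem:pts} the identity $c_H=b_{H,d}-\delta$; both inequalities in (a) and both extremal characterizations (b), (c) fall out of the single equation $\sum_{i<d}(n_i-b_{H,i})q^i=c_Hq^d$ with $0\le c_H\le q^{r_d}$. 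The payoff of your approach is uniformity (one lemma, one equation) and that it delivers the biconditionals in (b) and (c) cleanly---which is exactly what Lemma~\ref{lem:HLNS3-2} actually uses (namely that $b_{H,d}=\delta$ forces $ST\subseteq H$). The paper's approach, on the other hand, avoids the preliminary computation of $c_0$ and needs only the more elementary Lemma~\ref{lem:HeLe0} rather than Lemmas~\ref{lem:pts} and~\ref{lem:A2}.
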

\begin{proof}
For any subspace $U$ of $V$ and any hyperplane $H$ of $V$, let $B_H(U)$ denote the set of all points 
(i.e., $1$-subspaces) of $U$ that are not in $H$. 
Then elementary linear algebra arguments yield
\begin{equation}\label{eq:ny2}
|B_H(U)|=
\left\lbrace\begin{array}{lll}
0            &\hbox{if $U\subseteq H$},\\
q^{\dim U-1}&\hbox{otherwise}.
\end{array}\right.
\end{equation}
If $\alpha_i\neq 0$ then there is at least one hyperplane $H$ containing $i$ members of $\Da$. 
From~\eqref{eq:ny2} we then get that 
\[(n_{d}-i)q^{d-1}\leq |B_H(V)|=q^{n-1}\]
and thus, since $n=kd+r_d$ and $\ell=q^{r_d} \sum_{i=0}^{k-2}q^{id}$, we obtain
\begin{equation}\label{eq:ny3}
i\geq n_{d}-q^{(k-1){d}+r_d}=\ell q^{d}-q^{(k-1){d}+r_d}=\ell-q^{r_d}. 
\end{equation}
We now show  by contradiction that $i\leq \ell$ for all $\alpha_i\not=0$. Assume that $i\geq \ell+1$ for some $H$.
Since $\Da$ denotes the set of members of $\Pa$ that have dimension ${d}$, we have 
	$|\Pa\setminus\Da|=q^t+1$.
Then it follows from Lemma~\ref{lem:HeLe0} and the fact that ${d}>t$ that 
\begin{equation}\label{eq:ny4}
|\Pa|\geq i\cdot q^{d}+1\geq (\ell+1)q^{d}+1>\ell q^{d}+q^t+1=|\Da|+|\Pa\setminus\Da|=|\Pa|,
\end{equation}
which is a contradiction.  Now conclusion $(a)$ of the theorem follows from~\eqref{eq:ny3} and~\eqref{eq:ny4}. 

Next, we can infer from the  analysis leading to ~\eqref{eq:ny3} 
that the case $\alpha_\delta\neq0$ with $\delta=\ell-q^{r_d}$ occurs if all members of  $ST$ are contained in some  hyperplane $H$. This proves conclusion $(b)$.
Finally, if  $\alpha_\ell \neq 0$ occurs, then by definition of $\alpha_i$, there exists a hyperplane $H$ of $V$
that contains exactly $\ell$ subspaces of $\Da$.
Let $\Da'\subseteq \Da$ be the set containing those $\ell$
subspaces. Since $\Pa$ is a subspace partition of $V$, counting the nonzero vectors of $H$ in two ways yields
\begin{equation}\label{eq:ny5}
|H^*|=\left|\bigcup_{X\in\Da'}(X\cap H)^*\right|+ \left|\bigcup_{X\in\Da\setminus\Da'}(X\cap H)^*\right|+
\left|\bigcup_{X\in ST}(X\cap H)^*\right|,
\end{equation}
where $(X\cap H)^*$ is the set of nonzero vectors in $X\cap H$. 
Since $\Da$ contains $\ell q^{d}$ subspaces of dimension $d$, $\dim H=kd+r_d-1$, and $\dim(X\cap H)=d-1$ for all 
$X\in(\Da\setminus\Da')$, it follows from~\eqref{eq:ny5}
that 
\[\left|\bigcup_{X\in ST}(X\cap H)^*\right|=(q^{kd+r_d-1}-1)-\ell(q^{d}-1)-(\ell q^d-\ell)(q^{d-1}-1)=q^{d+r_d-1}-1,\]
where we used the fact that $\ell=q^{r_d} \sum_{i=0}^{k-2}q^{id}=q^{r_d}\frac{q^{(k-1){d}}-1}{q^d-1}$. 
This proves conclusion $(c)$.
\end{proof}
\begin{lemma}\label{lem:HLNS3-2}
Let $n$, $k$, ${d}$, and $r_d$ be integers such that $k\geq2$, $n=kd+r_d$, ${d}=t+r_t$, $1\leq r_d<d$, and $1\leq r_t<t$.
Let $\ell=q^{r_d} \sum_{i=0}^{k-2}q^{id}$, and let
$\Pa$ be a partition of $V(n,q)$ whose largest subspace dimension is $d$ and such that $n_{d}=\ell q^{d}$.
Assume, furthermore that $\Pa$ has a $d$-supertail $ST$ of minimum size $|ST|=q^t+1$ and 
with largest subspace dimension $t$.
Then the union of the subspaces in $ST$ is itself a 
$d+r_d$-subspace.
\end{lemma}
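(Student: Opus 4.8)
The plan is to adapt the argument of~\cite[Proposition~$1$]{HLNS3} and then invoke Proposition~\ref{pro:is-d-dim}. Set $e=d+r_d$ and $W=\bigcup_{X\in ST}X$. It suffices to show that $W$ consists of exactly $\ta_{e}$ points and that $W$ is contained in precisely $\ta_{n-e}$ hyperplanes of $V$. For the point count, note that $\Pa$ is the disjoint union of $\Da$ and $ST$, so the packing condition~\eqref{eq:pack} gives
\[\sum_{X\in ST}|X^*|=(q^n-1)-n_d(q^d-1).\]
Substituting $n_{d}=\ell q^{d}$ and $\ell=q^{r_d}\frac{q^{(k-1)d}-1}{q^d-1}$ and simplifying yields $\sum_{X\in ST}|X^*|=q^{e}-1$; since the members of $ST$ are pairwise point‑disjoint (being blocks of the partition $\Pa$), the point set $W$ has exactly $(q^{e}-1)/(q-1)=\ta_{e}$ points.

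The crux is to count the hyperplanes containing $W$. I would first record the following observation. If $H\in\Ha$ satisfies $b_{H,d}=\delta=\ell-q^{r_d}$, then, since $\delta=n_d-q^{n-d}$, the $d$-subspaces of $\Pa$ not contained in $H$ cover $(n_d-\delta)q^{d-1}=q^{n-1}$ points outside $H$, which is the total number of points of $V$ lying outside $H$ (this is exactly the point count behind~\eqref{eq:ny2}--\eqref{eq:ny3} in the proof of Lemma~\ref{lem:HLNS3-1}). As the blocks of $\Pa$ are pairwise point‑disjoint, no member of $ST$ can then have a point outside $H$; hence $ST\subseteq H$ and so $W\subseteq H$. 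Therefore $\{H\in\Ha:\,b_{H,d}=\delta\}\subseteq\{H\in\Ha:\,W\subseteq H\}$, so the number of hyperplanes containing $W$ is at least $\alpha_\delta$. On the other hand $W$ has $\ta_e$ points, hence spans a subspace of dimension at least $e$, so by Proposition~\ref{pro:num-hyperplanes} the number of hyperplanes containing $W$ is at most $\ta_{n-e}$. Thus it remains to prove $\alpha_\delta=\ta_{n-e}$.

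To evaluate $\alpha_\delta$ I would apply Lemma~\ref{lem:HeLe1}(i)--(iii) to the family $\Da$ of $d$-subspaces of $\Pa$ (legitimate, since $\Pa$ contains subspaces of the two distinct dimensions $d$ and $t$, both at most $n-2$, and $n>2d$ because $k\ge2$ and $r_d\ge1$). These identities give
\[\sum_{H\in\Ha}1=\ta_n,\qquad \sum_{H\in\Ha}b_{H,d}=n_d\ta_{n-d},\qquad \sum_{H\in\Ha}\binom{b_{H,d}}{2}=\binom{n_d}{2}\ta_{n-2d},\]
and, together with $\sum_{H\in\Ha}b_{H,d}^2=2\sum_{H\in\Ha}\binom{b_{H,d}}{2}+\sum_{H\in\Ha}b_{H,d}$, they allow one to compute $\sum_{H\in\Ha}(b_{H,d}-\delta)(b_{H,d}-\ell)$ explicitly; a direct calculation using $n_d=\ell q^d$, $\delta=\ell-q^{r_d}$, and the closed form for $\ell$ shows that this sum equals $0$. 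By Lemma~\ref{lem:HLNS3-1}(a) every hyperplane satisfies $\delta\le b_{H,d}\le\ell$, so every summand is nonpositive; hence every summand vanishes, i.e.\ $b_{H,d}\in\{\delta,\ell\}$ for all $H\in\Ha$. Consequently $\alpha_\delta+\alpha_\ell=\ta_n$ and $\delta\alpha_\delta+\ell\alpha_\ell=n_d\ta_{n-d}$, and solving this $2\times2$ system (again using the value of $\ell$) gives
\[\alpha_\delta=\frac{\ell\,\ta_n-n_d\ta_{n-d}}{\ell-\delta}=\ta_{(k-1)d}=\ta_{n-e}.\]
Combining with the previous paragraph, $W$ has $\ta_e$ points and lies in exactly $\ta_{n-e}$ hyperplanes, so Proposition~\ref{pro:is-d-dim} shows that $W$ is an $e=(d+r_d)$-subspace, as claimed.

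The step I expect to be the main obstacle is the vanishing of $\sum_{H\in\Ha}(b_{H,d}-\delta)(b_{H,d}-\ell)$: this is the point where the hypothesis $n_d=\ell q^d$ is used decisively, and the ``sum of nonpositive terms equals zero'' device that extracts $b_{H,d}\in\{\delta,\ell\}$ from it relies on the two‑sided range $\delta\le b_{H,d}\le\ell$ furnished by Lemma~\ref{lem:HLNS3-1}(a) (whose own upper bound $b_{H,d}\le\ell$ already uses $n_d=\ell q^d$). The remaining steps are routine manipulations of the Heden--Lehmann identities and of the closed form for $\ell$.
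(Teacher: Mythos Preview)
Your argument is correct and tracks the paper's proof closely: both confine $b_{H,d}$ to the interval $[\delta,\ell]$ via Lemma~\ref{lem:HLNS3-1}(a), establish that $\sum_{H}(b_{H,d}-\delta)(b_{H,d}-\ell)=0$ so that $b_{H,d}\in\{\delta,\ell\}$ for every hyperplane, solve for $\alpha_\delta=\ta_{(k-1)d}=\ta_{n-e}$, and then combine this hyperplane count with the point count $|W^*|=q^{d+r_d}-1$ to conclude that $W$ is a $(d+r_d)$-subspace. The one substantive difference is in how the vanishing of the quadratic sum is obtained. You verify it by direct algebraic computation from the Heden--Lehmann identities and the closed form for $\ell$; the paper instead observes that the value of this sum depends only on $n_d$ (not on the particular partition) and then exhibits, via Lemma~\ref{lem:Be}, an auxiliary partition $\Pa_0$ with the same $n_d$ whose supertail is already a $(d+r_d)$-subspace, so that every term---and hence the sum---is visibly zero for $\Pa_0$. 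The paper's trick is elegant and bypasses the algebra, but it tacitly uses $r_d\le t-r_t$ to build $\Pa_0$, a hypothesis not stated in the lemma (though it is verified before the lemma is invoked in the proof of Theorem~\ref{thm:main}); your direct computation avoids this and proves the lemma exactly as stated. Your concluding step via Proposition~\ref{pro:is-d-dim} and the paper's via intersecting the $\alpha_\delta$ hyperplanes in $\Ha_0$ are merely two phrasings of the same final inference.
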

\begin{proof}  
We again let $\Da$ denote the family of $d$-subspaces in $\Pa$, and 
we let $\alpha_i$ denote the number of hyperplanes in $V=V(n,q)$ that contain exactly 
$i$ members of $\Da$.
It is trivial that $\alpha_i\geq0$ for all $i$; a fact that we will use later for all $i$. 
From Lemma \ref{lem:HLNS3-1}, we know that 
\[\alpha_i\neq0\qquad\Longrightarrow\qquad \delta=\ell-q^{r_d}\leq i\leq \ell.\]

We first define the integers $x$, $y$ and $z$ by
\[
x=\sum_{i=\delta}^\ell i\alpha_i,\qquad y=\sum_{i=\delta}^{\ell}{i\choose2}\alpha_i,
\mbox { and } z=\sum_{i=\delta}^{\ell} \alpha_i.
\]
Each member of $\Da$ is a $d$-subspace 
and is thus contained in exactly $(q^{(k-1){d}+r_d}-1)/(q-1)$ 
hyperplanes. By double counting incidences $(H,U)$, for $H\in\Ha$ with $U\in\Da$ and 
$U\subseteq H$, we obtain 
\begin{equation}\label{eq:x}
x=\sum_{i=\delta}^\ell i\alpha_i=n_{d}\cdot\ta_{(k-1)d+r_d}.  
\end{equation} 
Any two members of $\Da$ are contained in $(q^{(k-2){d}+r_d}-1)/(q-1)$ hyperplanes. 
Thus, by double counting incidences, we get
\begin{equation}\label{eq:y}
y=\sum_{i=\delta}^{\ell}{i\choose2}\alpha_i={n_{d}\choose2}\ta_{(k-2)d+r_d}. 
\end{equation}
Furthermore, by counting the number of hyperplanes in $V$, we obtain
\begin{equation}\label{eq:z}
z=\sum_{i=\delta}^{\ell} \alpha_i=\ta_{kd+r_d}. 
\end{equation}
Observe that~\eqref{eq:x},~\eqref{eq:y}, and~\eqref{eq:z} imply that the constants 
$x$, $y$ and $z$ are independent of the particular choice of subspace of $\Pa$ with $n_{d}=\ell q^{d}$ 
and minimum size ${d}$-supertail. Moreover,
\begin{equation}\label{eq:ABC-1}
\sum_{i=\delta}^{\ell}\alpha_i(i-\delta)(i-\ell)=2y+x-(\delta+\ell)x+\delta\ell z.
\end{equation}
Also note the following facts that we shall use later:
\begin{equation}\label{eq:ABC-2}
(i-\delta)(i-\ell)
\begin{cases}
=0\;&\mbox{ if } i=\delta,\\
<0\;&\mbox{ if } \delta<i<\ell,\\
=0\;&\mbox{ if } i=\ell.
\end{cases}
\end{equation}

Since $n=kd+r_d$, ${d}=t+r_t$, and $1\leq r_d\leq t-r_t$, we can use Lemma~\ref{lem:Be} to construct a partition 
$\Pa_0$ of $V(n,q)$ with $\ell q^{d}$ subspaces of dimension ${d}$, one $t$-subspace, and $q^t$ subspaces of dimension $r_d+r_t$. (Note that if $r_d=t-r_t$, $\Pa_0$ has type 
$[{d}^{\ell q^{d}},t^{q^t+1}]$.)

In order to show that the right side of \eqref{eq:ABC-1} is equal to zero, we consider the  
partition $\Pa_0$. From the construction of the partition ${\mathcal P}_0$, it follows that the points in the 
$d$-supertail constitute a ${d}+r_d$-subspace $W$.
Any hyperplane $H\in\mathcal{H}$ either contains $W$ or intersects $W$ in $(q^{\dim W-1}-1)$ non-zero vectors. 
These are the two extremal cases discussed in parts~$(b)$ and $(c)$ of  Lemma~\ref{lem:HLNS3-1}. 
So for the partition $\Pa_0$, we have $\alpha_i=0$ for $\delta<i<\ell$. Then, it follows from~\eqref{eq:ABC-2}
that the left side of~\eqref{eq:ABC-1} is equal to zero. Thus, we obtain from~\eqref{eq:ABC-1} that for any partition $\Pa$,
\[ \sum_{i=\delta+1}^{\ell-1}\alpha_i(i-\delta)(i-\ell)=0.\]
As $\alpha_i\geq0$, we may thus conclude from the equation above and~\eqref{eq:ABC-2} that 
\[\delta< i< \ell\qquad\Longrightarrow\qquad \alpha_i=0.\]
Hence, we can now use~\eqref{eq:x} and~\eqref{eq:z} (or refer to the partition $\Pa_0$,
which must have the same solution $\alpha_\delta$ and $\alpha_\ell$ to these two equations) 
to calculate $\alpha_\delta$ (and $\alpha_\ell$). We then get that 
\begin{equation}\label{eq:xc}
\alpha_\delta=\ta_{(k-1)d}.
\end{equation}

Let $\gamma=q^{(k-1){d}+r_d}$. Since 
\[\ell=q^{r_d} \sum_{i=0}^{k-2}q^{id}=q^{r_d}\frac{q^{(k-1){d}}-1}{q^d-1},\]
it follows that
\begin{equation}\label{eq:H0}
|\Da|-\gamma=\ell q^d-q^{(k-1){d}+r_d}=\ell-q^{r_d}=\delta.
\end{equation}
Let $\mathcal{H}_0$ denote the set of all hyperplanes of $V$ that intersect 
$\gamma$ members of $\Da$. 
Since a hyperplane of $V$ either contains a given subspace or intersects it, 
it follows from~\eqref{eq:H0} that $\mathcal{H}_0$ can be also defined as the set of all hyperplanes of $V$ that 
contain $\delta$ members of $\Da$. Thus it follows from the definition of $\alpha_i$ that $\alpha_\delta=|\Ha_0|$.
Let 
\[W=\bigcap_{H\in\mathcal{H}_0} H\;\mbox{ and }\; R=\bigcup_{X\in ST}
X.\]
Since $\Pa$ is a subspace partition of $V$ and $\Da$ contains $\ell q^d$ subspaces of dimension 
$d$, the number of non zero vectors in $R$ is
\begin{equation}\label{eq:S}
|R^*|=|V^*|-\sum_{U\in\Da}U^*=q^n-1-\ell q^{d}(q^{d}-1)=q^{{d}+r_d}-1. 
\end{equation}
Moreover, it follows from Lemma~\ref{lem:HLNS3-1}~$(b)$ that
\begin{equation}\label{eq:SW}
R\subseteq\bigcap_{H\in\mathcal{H}_0} H =W,
\end{equation} 
and it follows from~\eqref{eq:xc} that
\begin{equation}\label{eq:W}
\dim W\leq n-(k-1){d}={d}+r_d.
\end{equation} 
Thus, it follows from~\eqref{eq:S}--\eqref{eq:W} that $R=W$ is a $d+r_d$-subspace.
\end{proof}

\bs
\paragraph{}
\n{\bf Acknowledgement:}
We thank the referees for their valuable comments which helped 
improve the presentation and simplify some proofs (in particular, Lemma~\ref{lem:sub}).

\end{document}